\newtheorem{theorem}{Theorem}[section]
\newtheorem{lemma}[theorem]{Lemma}
\newtheorem{proposition}[theorem]{Proposition}
\theoremstyle{definition}
\newtheorem{remark}[theorem]{Remark}
\newtheorem{example}[theorem]{Example}
\numberwithin{equation}{section}
\DeclareMathOperator{\Aa}{\mathcal{A}}
\newcommand{\lra}{\longrightarrow}
\newcommand{\oQ}{\overline{Q}}
\newcommand{\benu}{\begin{enumerate}}
\newcommand{\enu}{\end{enumerate}}
\newcommand{\A}{\mathbb{A}}
\newcommand{\D}{\mathbb{D}}
\newcommand{\E}{\mathbb{E}}
\begin{document}

\baselineskip=17pt


\title[Representation type]{A note on the representation type of generalized path algebras}

\author[Chust]{Viktor Chust}
\address{(Viktor Chust) Institute of Mathematics and Statistics - University of São Paulo, São Paulo, Brazil}
\email{viktorchust.math@gmail.com}

\author[Coelho]{Fl\'avio U. Coelho}
\address{(Flávio U. Coelho) Institute of Mathematics and Statistics - University of São Paulo, São Paulo, Brazil}
\email{fucoelho@ime.usp.br}

\date{}

\subjclass[2020]{Primary 16G60, Secondary 16G20}

\keywords{generalized path algebras, representations of generalized path algebras}

\maketitle

\begin{abstract}
  In \cite{ICNLP}, the authors describe the ordinary quiver of a given generalized path algebra, a concept introduced by Coelho and Liu in \cite{CLiu}. In this short note, we use this result to characterize which generalized path algebras have finite or tame representation type. 
\end{abstract}

\section{Introduction}

One of the questions involving the study of algebra, under the representation theory perspective, concerns its representation type,  an invariant which measures, intuitively speaking, the complexity of the category of modules. We say, for instance, that the algebra is {\it representation-finite} if, clearly, there are only finitely many non-isomorphic indecomposable modules. The algebras which are not representation-finite can be divided initially in two classes, the so called tame and wild algebras. In this note, we will be concerned in analyising the representation type of a special class of algebras which we shall now describe.

Let $k$ be an algebraically closed field. Here, an algebra $A$ will mean a basic finite dimensional $k$-algebra and we shall also assume it is indecomposable as an algebra. In this setting, a well-known result due to Gabriel states that there exists a (connected) quiver $Q$ such that $A$ is isomorphic to the quotient of the path algebra $kQ$ by an admissible ideal (see below for details). Theorems  \ref{dynkin} and \ref{euclidean} below describe when a given path algebra $kQ$ is representation-finite or of tame type in terms of $Q$.

The notion of path algebras can be generalized by using a quiver $Q$ and assigning to each vertex $i$ of $Q$ an algebra $A_i$ instead of just $k$ as in the traditional path algebra construction (see \cite{CLiu}). They are called generalized path algebras. 

The purpose of this short note is to give a corresponding description of finite and tame type in this generalized setting. We shall recall the necessary concepts in Section \ref{prel}, while Sections \ref{finiteness} and \ref{tameness}  will be devoted to the announced discussion. 

A last remark concerning the general case, that is, quotients of generalized path algebras. It is still a very complex task to describe all representation-finite algebras through the quiver and the relations which generate the admissible ideal. When we restrict our study to the generalized path algebras and not to a possible quotient of it we do not have to deal with the relations on its defining quiver. 

\section{Preliminaries} \label{prel}

We shall here recall some notions needed in the sequel. For details on the basics of the representation theory, we refer the reader to \cite{AC,ASS} and on generalized path algebras to \cite{CC1,CLiu}.

\subsection{Path algebras} A \textbf{quiver} is a tuple $Q = (Q_0,Q_1,s,e)$, where $Q_0$ and $Q_1$ are sets and $s,e: Q_1 \rightarrow Q_0$ are functions. The elements of $Q_0$ are called \textbf{vertices} while the elements of $Q_1$ are called \textbf{arrows}, and given an arrow $\alpha \in Q_1$, $s(\alpha)$ is called the \textbf{start vertex} of $\alpha$, and $e(\alpha)$ is called the \textbf{target vertex} of $\alpha$. An arrow starting and ending at the same vertex will be called a {\bf loop}. An {\bf end point of a quiver $Q$} is a vertex which is involved in at most one arrow. The {\bf underlying graph $\overline{Q}$ of a quiver} $Q$ is given by the same set of vertices and with {\bf edges} replacing the arrows. We shall assume here that all quivers are finite, that is, the sets $Q_0$ and $Q_1$ are finite. 

A {\bf path} in a quiver $Q$ is given by a concatenation of arrows. The path algebra $kQ$ of the quiver $Q$ is defined as follows. The set of all paths of $Q$ is a basis of $kQ$ as a $k$-vector space. The natural concatenation of paths gives a multiplication in $kQ$ which, spanned to all its elements, will give an algebra structure. Clearly, $kQ$ will be finite dimensional if and only if there are no oriented cycles in $Q$, that is, no paths starting and ending at the same vertex. Such an algebra will be referred as the {\bf path algebra} of $Q$.

An ideal $I$ of a path algebra is called {\bf admissible} provided: (i) $I$ is generated by combinations of paths of length at least two; and (ii) there exists an $m$ such that all paths of length at least $m$ lie inside $I$. We recall the following well-established fact (see \cite{AC} for instance). 

\begin{theorem}  \label{gabriel}
A finite dimensional basic algebra over an algebraically closed field $A$ is isomorphic to a quotient of a path algebra $kQ_A$ by an admissible ideal. 
\end{theorem}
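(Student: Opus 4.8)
The plan is to run the classical argument behind Gabriel's theorem. Write $J=\operatorname{rad} A$ for the Jacobson radical; since $A$ is finite dimensional, $J$ is nilpotent, say $J^m=0$. Because $A$ is basic and $k$ is algebraically closed, the semisimple algebra $A/J$ is isomorphic to a finite product $k\times\cdots\times k$, and one can lift the canonical idempotents of $A/J$ to a complete set $e_1,\dots,e_n$ of primitive orthogonal idempotents of $A$ with $e_1+\cdots+e_n=1$. These will index the vertices of the quiver $Q_A$, and I would declare the number of arrows $i\to j$ to be $\dim_k e_j (J/J^2) e_i$.

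Next I would produce a surjective algebra morphism $\varphi\colon kQ_A\to A$. For each ordered pair $(i,j)$ choose elements of $e_j J e_i$ whose residues modulo $J^2$ form a $k$-basis of $e_j(J/J^2)e_i$; sending the trivial path at $i$ to $e_i$ and each arrow to the corresponding chosen element extends (multiplicatively along paths, by the universal property of the path algebra) to a $k$-algebra homomorphism $\varphi$. Surjectivity is the first point that needs care: the image contains the $e_i$ and a set of representatives spanning $J/J^2$, hence --- since products of $\ell$ such representatives span $J^\ell/J^{\ell+1}$ --- a decreasing induction starting from $J^m=0$ (equivalently, a Nakayama-type argument) shows the image contains $J$, and therefore all of $A=\bigoplus_{i,j} e_jAe_i$.

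It remains to check that $I:=\operatorname{Ker}\varphi$ is an admissible ideal. Let $R\subseteq kQ_A$ be the arrow ideal. Since $\varphi$ carries the $e_i$ to a linearly independent family of nonzero idempotents, $I\subseteq R$; and since the arrows were chosen precisely so that the induced map $R/R^2\to J/J^2$ is an isomorphism, one gets the sharper inclusion $I\subseteq R^2$, which is condition (i) of admissibility (every element of $I$ is a combination of paths of length $\geq 2$). Condition (ii) is immediate: $\varphi(R^m)=J^m=0$, so $R^m\subseteq I$. This yields $A\cong kQ_A/I$ with $I$ admissible. If moreover $A$ is indecomposable as an algebra (the standing hypothesis of the introduction), then $Q_A$ is connected, since a partition of the vertices into the connected components of $Q_A$ would split $1$ into a sum of nontrivial central idempotents and decompose $A$.

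The main obstacle is concentrated in the two places above where the nilpotency of $J$ is combined with the specific choice of arrow images: proving that $\varphi$ is onto, and proving the sharper inclusion $I\subseteq R^2$ rather than merely $I\subseteq R$. Everything else is bookkeeping with idempotents and the radical filtration.
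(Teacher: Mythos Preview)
Your argument is correct and is exactly the classical proof of Gabriel's theorem: lift a complete set of primitive orthogonal idempotents from $A/J$, define the arrows via $\dim_k e_j(J/J^2)e_i$, build the surjection $\varphi$ by choosing representatives, and verify admissibility of $\ker\varphi$ using the radical filtration and the nilpotency of $J$. The two points you flag as requiring care (surjectivity via the induction on $J^\ell/J^{\ell+1}$, and the inclusion $I\subseteq R^2$ from the fact that $R/R^2\to J/J^2$ is an isomorphism) are handled correctly.

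There is nothing to compare against, however: the paper does not prove this theorem. It is stated as a ``well-established fact'' with a reference to \cite{AC}, and is used only as background to justify that every basic finite-dimensional $k$-algebra has an ordinary quiver. So your write-up is not redundant with anything in the paper---it simply supplies the omitted standard argument.
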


In this case, the quiver $Q_A$ is called the {\bf ordinary quiver of $A$}.

\subsection{Generalized path algebras} \label{sub:gpalgebras}
As mentioned in the introduction, we are interested in a generalization of the path algebras. This generalization was introduced in \cite{CLiu} and goes as follows. Let $\Gamma$ denote a quiver without oriented cycles and $\Aa = \{ A_i \colon i \in \Gamma_0\}$ denote a family of basic finite dimensional $k$-algebras. To such data, we assigned  an algebra $\Lambda = k(\Gamma, \Aa)$ with a natural multiplication given not only by the concatenation of paths of the quiver but also by those of the algebras $A_i$ associated with the vertices of $\Gamma$. Such an algebra is called a {\bf generalized path algebra}, of {\bf gp-algebra}, for short. We shall keep the above notation whenever we talk on a gp-algebra $\Lambda = k(\Gamma, \Aa)$. See \cite{CC1} for further details.

Obviously, any algebra $A$ can be seen as a gp-algebra in a trivial way. Just consider the quiver $\Gamma$ with a unique vertex and no arrows and $\Aa = \{ A \}$. So, $k (\Gamma, \Aa) \cong A$.

\subsection{Realizing a gp-algebra as a path algebra} Let $\Lambda =  k(\Gamma, \Aa)$ be a gp-algebra. For each $i \in \Gamma_0$, due to Theorem \ref{gabriel}, $A_i \cong \frac{k \Sigma_i}{I_i}$, where  $\Sigma_i$ a quiver, and $I_i$ is an admissible ideal of $k\Sigma_i$. Also, by the same result,  $\Lambda$ is isomorphic to a quotient of a path algebra $kQ$ by an admissible ideal $J$. 

One could wonder how to construct the ordinary quiver $Q$ of $\Lambda$ from the quiver $\Gamma$ and the ordinary quivers $\Sigma_i$ of the algebras $A_i$. This has been done in  \cite{ICNLP} (see also \cite{CC1} for further details and generalizations), but for convenience, we shall repeat it here.

Keeping the above notations, the quiver $Q$ is constructed as follows. Its set of vertices $Q_0$ is the union $\bigcup_{i\in \Gamma_0} (\Sigma_i)_0$ of the vertices of the quivers $\Sigma_i, i \in \Gamma_0$. For the arrows in $Q_1$, they are of two kinds. Let $a, b \in Q_0$. So, there are $i, j$ such that $a \in (\Sigma_i)_0$ and $b \in (\Sigma_j)_0$.
\benu
\item[(i)] If $i \neq j$, then the number of arrows $a \rightarrow b$ in $Q_1$ equals the number of arrows $i \rightarrow j$ in $\Gamma_1$. These arrows are called {\bf arrows of type I}.
\item[(ii)] If $ i = j$, then the number of arrows $a \rightarrow b$ in $Q_1$ equals the number of arrows $a \rightarrow b$ in $(\Sigma_i)_1$. These arrows are called {\bf arrows of type II}.
\enu

Accordingly, we will say that an edge in the underlying graph of $Q$ is of type I or II if the corresponding arrow is of the respective type. 

\begin{remark} \label{typeI}
Let $kQ/I$ be the quotient of a path algebra $kQ$ by an admissible ideal. Observe that if the underlying graph $\overline{Q}$  contains at most one edge of type II, then there are no relations of $I$ involving the corresponding arrows. 
\end{remark}

Observe finally that the ordinary quiver $Q$ of $\Lambda$ contains a 
subquiver equal to $\Gamma$. We emphasize that $\Gamma$ has no oriented cycles (because we are dealing with finite dimensional algebras). We shall keep the above notations for the rest of the work. 

\begin{example}
\label{ex:gen_icnlp}
Let $\Lambda$ be the gp-algebra given by the quiver

{\small \begin{displaymath}
\xymatrix{
{\begin{matrix}
 k \\ \bullet \\ 1
 \end{matrix}}
\ar[rr]^{\alpha}="a" && 
{\begin{matrix}
 \frac{k\Sigma_2}{I_2} \\ \bullet \\ 2
 \end{matrix}}
\ar@<0.5ex>[rr]^{\beta}="b" \ar@<-0.5ex>[rr]_{\gamma} &&
{\begin{matrix}
 k \\ \bullet \\ 3
 \end{matrix}} }
\end{displaymath} }
where $\Sigma_2$ is the quiver
\begin{displaymath}
\xymatrix{
\bullet \ar[d]^{\delta}\\
\bullet \ar[d]^{\varepsilon}\\ 
\bullet}
\end{displaymath}
and $I_2 = (\delta \varepsilon)$. The ordinary quiver of $\Lambda$ is given by

{\small \begin{displaymath}
\xymatrix{
&&& \bullet_{21} \ar[dd]^{\delta} \ar@<0.5ex>[ddrrr]^{\beta_1} \ar@<-0.5ex>[ddrrr]_{\gamma_1}&&& \\
&&&&&&\\
\bullet_1 \ar[uurrr]^{\alpha_1} \ar[rrr]^{\alpha_2} \ar[ddrrr]_{\alpha_3} &&& \bullet_{22} \ar[dd]^{\varepsilon} \ar@<0.5ex>[rrr]^{\beta_2} \ar@<-0.5ex>[rrr]_{\gamma_2} &&& \bullet_3 \\
&&&&&&\\
&&& \bullet_{23} \ar@<0.5ex>[uurrr]^{\beta_3} \ar@<-0.5ex>[uurrr]_{\gamma_3} &&& }
\end{displaymath} }
and $J = (\delta \varepsilon)$.
\end{example}

\subsection{Representations} A {\bf bound quiver} is given by a pair $(Q,I)$, where $Q$ is a quiver while $I$ is a set of relations of the paths in $Q$ which generates an admissible ideal of $kQ$. Denote by rep$_k(Q,I)$ the category of finite dimensional $k$-representations of $Q$ bound by the relations $I$ (see \cite{AC}). It is a well-established fact that there is  a $k$-linear equivalence  between rep$_k(Q,I)$ and the category mod$kQ/I$ of finitely generated $kQ/I-$modules. 

A similar equivalence can be thought in the generalized case. If $\Lambda = k(\Gamma, \Aa)$, then the categories rep$_k(\Gamma, \Aa)$ of the finite dimensional $k$-representations of $(\Gamma, \Aa)$ and the category of $k(\Gamma, \Aa)-$modules are equivalent. Recall that a finite dimensional $k$-representation of $(\Gamma, \Aa)$ is given by an $A_i$-module $M_i$ at each vertex $i\in \Gamma_0$ and usual $k$-linear transformations assigned to each arrow. Recall the following result from \cite{ICNLP}.

\begin{proposition} \label{prop:rep}
Let $\Lambda = k(\Gamma, \Aa)$ be a gp-algebra and $\Lambda \cong kQ/I$ its realization as a bound path algebra, then $\Gamma$ is a subquiver of $Q$ and there exists a full and faithful functor rep$_k(\Gamma) \lra $ rep$_k (Q,I)$. 
\end{proposition}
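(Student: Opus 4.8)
The plan is to treat the two assertions separately: the first is essentially the observation recorded just before Example~\ref{ex:gen_icnlp}, and the second is obtained by extending representations of $\Gamma$ ``by zero'' along the resulting embedding. For the embedding, recall that $Q_0$ is the disjoint union $\bigcup_{i\in\Gamma_0}(\Sigma_i)_0$; since each $A_i$ is a nonzero finite dimensional algebra, every $\Sigma_i$ has at least one vertex, and I would fix one, say $v_i\in(\Sigma_i)_0$, for each $i$, noting that disjointness makes $i\mapsto v_i$ injective. As $\Gamma$ has no oriented cycles it has no loops, so every arrow $\epsilon\colon i\to j$ of $\Gamma$ satisfies $i\neq j$, and clause~(i) of the construction produces exactly as many arrows $v_i\to v_j$ in $Q$ as there are arrows $i\to j$ in $\Gamma$; I would therefore fix, for each ordered pair $i,j$, a bijection $\epsilon\mapsto\tilde\epsilon$ from the arrows $i\to j$ of $\Gamma$ onto the arrows $v_i\to v_j$ of $Q$. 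These choices assemble into an injective morphism of quivers whose image is a subquiver of $Q$ isomorphic to $\Gamma$ and all of whose arrows are of type~I.

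For the functor I would take $F\colon\mathrm{rep}_k(\Gamma)\to\mathrm{rep}_k(Q,I)$ to send a representation $V=((V_i)_i,(V_\epsilon)_\epsilon)$ of $\Gamma$ to the representation of $Q$ that agrees with $V$ on the above subquiver (that is, $F(V)_{v_i}=V_i$ and $F(V)_{\tilde\epsilon}=V_\epsilon$) and is zero on every other vertex and every other arrow of $Q$, in particular on every arrow of type~II, with morphisms extended by zero in the same way. Regarded as a functor to $\mathrm{rep}_k(Q)$, such an extension-by-zero functor is automatically full and faithful: a morphism between two representations supported on the subquiver must vanish on the remaining vertices, its constraints along the remaining arrows hold automatically because each of those arrows is assigned the zero map by both representations, and its constraints along the arrows $\tilde\epsilon$ are exactly those defining a morphism in $\mathrm{rep}_k(\Gamma)$. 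So the one substantial point is that $F(V)$ indeed lies in $\mathrm{rep}_k(Q,I)$, i.e.\ satisfies the relations. For this I would invoke the feature of the realization in \cite{ICNLP} (see also \cite{CC1}) that $I$ is generated by the images in $kQ$ of the admissible ideals $I_i\subseteq k\Sigma_i$, so that every element of $I$ is a $k$-linear combination of paths of $Q$ each containing at least one arrow of type~II; on any such path the linear map determined by $F(V)$ is a composite that includes a zero map, hence is zero, so $F(V)$ annihilates $I$. As $\mathrm{rep}_k(Q,I)$ is a full subcategory of $\mathrm{rep}_k(Q)$, the functor $F$ stays full and faithful onto it.

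The only delicate ingredient is the last one: pinning down which arrows of $Q$ the relations of $I$ can involve. Everything else is bookkeeping with the explicit description of $Q$ recalled in Section~\ref{prel}. Thus I expect the real work to be the appeal to \cite{ICNLP} (or, for a self-contained account, a short re-derivation from the definition of $k(\Gamma,\Aa)$ and its grading by number of arrows) of the fact that every relation of a generalized path algebra is concentrated in the vertex algebras $A_i$, and therefore involves only arrows of type~II; once that is granted, the argument above goes through without further difficulty.
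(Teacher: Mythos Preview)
The paper does not actually prove this proposition: it records the first assertion as an observation immediately after the description of $Q$ (``Observe finally that the ordinary quiver $Q$ of $\Lambda$ contains a subquiver equal to $\Gamma$'') and attributes the whole statement, in particular the existence of the full and faithful functor, to \cite{ICNLP}. So there is no argument in the paper to compare against; your proposal is a self-contained reconstruction.

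That reconstruction is correct. Your embedding $i\mapsto v_i$, $\epsilon\mapsto\tilde\epsilon$ is exactly the observation the paper alludes to, made precise; the point that $i\neq j$ forces all arrows $v_i\to v_j$ to be of type~I, so that the bijection on arrows is well defined, is the only thing one has to check there. For the functor, extension by zero along a full subquiver is always full and faithful into $\mathrm{rep}_k(Q)$, and your identification of the substantive step---that $I$ is generated, as described in \cite{ICNLP,CC1}, by the images of the admissible ideals $I_i\subseteq k\Sigma_i$, so every relation is a combination of paths each containing an arrow of type~II---is exactly what is needed to see that $F(V)$ satisfies the relations. Once that is granted, the argument is complete. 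In short: your approach is sound and fills in what the paper leaves to the citation; the only external input is the structure of $I$, which you correctly source from \cite{ICNLP}.
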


\section{Representation-finiteness} \label{finiteness}

From now on, $\Lambda = k(\Gamma, \Aa) \cong kQ/I$ will denote a gp-algebra and its realization as a bound quiver algebra. 
In this section, we shall analise the case where the algebras are representation-finite, that is, algebras admitting only finitely many nonisomorphic  indecomposable modules. 
The case of path algebras is well-established by the theorem below (see \cite{ASS}). 
\vspace{.3 cm}

\begin{theorem} \label{dynkin}
Let $A = kQ$ be a path algebra given by the quiver $Q$.
Then $A$ is representation-finite if and only if the underlying graph of $Q$ is one of the following:
{\small \begin{displaymath}
\xymatrix{\mathbb{A}_n: & 1 \ar@{-}[r] & 2 \ar@{-}[r] & 3 \ar@{-}[r] & \cdots \ar@{-}[r] & n & (n\geq 1)}
\end{displaymath}
\begin{displaymath}
\xymatrix{& 1\ar@{-}[dr]&&&&& \\
\mathbb{D}_n: &  & 3 \ar@{-}[r] & 4 \ar@{-}[r] & \cdots \ar@{-}[r] & n & (n\geq 4)\\
& 2 \ar@{-}[ur]&&&&&}
\end{displaymath}
\begin{displaymath}
\xymatrix{& & &4 \ar@{-}[d]&& \\
\mathbb{E}_6: & 1 \ar@{-}[r]& 2 \ar@{-}[r] & 3 \ar@{-}[r] & 5 \ar@{-}[r] & 6}
\end{displaymath}
\begin{displaymath}
\xymatrix{& & &4 \ar@{-}[d]&&& \\
\mathbb{E}_7: & 1 \ar@{-}[r]& 2 \ar@{-}[r] & 3 \ar@{-}[r] & 5 \ar@{-}[r] & 6 \ar@{-}[r]& 7}
\end{displaymath}
\begin{displaymath}
\xymatrix{& & &4 \ar@{-}[d]&& &&\\
\mathbb{E}_8: & 1 \ar@{-}[r]& 2 \ar@{-}[r] & 3 \ar@{-}[r] & 5 \ar@{-}[r] & 6 \ar@{-}[r] & 7 \ar@{-}[r]& 8}
\end{displaymath} }
\end{theorem}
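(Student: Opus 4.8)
The plan is to prove this classical result of Gabriel by converting representation-finiteness into positive-definiteness of the \emph{Tits form} of $Q$ and then classifying the graphs for which that holds. Since $Q$ has no oriented cycles, $A=kQ$ is hereditary, so $\operatorname{Ext}^i(M,N)=0$ for all $A$-modules $M,N$ and all $i\geq 2$, and the Euler form $\langle M,N\rangle=\dimk\Hom(M,N)-\dimk\operatorname{Ext}^1(M,N)$ depends only on the dimension vectors $\underline{\dim}\,M,\underline{\dim}\,N\in\Z^{Q_0}$. Writing $q_Q$ for the associated quadratic form, $q_Q(x)=\langle x,x\rangle=\sum_{i\in Q_0}x_i^2-\sum_{\alpha\in Q_1}x_{s(\alpha)}x_{e(\alpha)}$, the first target is the equivalence: $A$ is representation-finite if and only if $q_Q$ is positive definite.

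For the implication ``representation-finite $\Rightarrow q_Q$ positive definite'' I would argue geometrically. For $d\in\mathbb{N}^{Q_0}$ the affine space $\operatorname{rep}(Q,d)=\prod_{\alpha\in Q_1}\Hom(k^{d_{s(\alpha)}},k^{d_{e(\alpha)}})$ carries a $G_d=\prod_i GL_{d_i}(k)$-action whose orbits are exactly the isomorphism classes of representations of dimension vector $d$, and $\dim\operatorname{rep}(Q,d)-\dim G_d=-q_Q(d)$, while the scalars act trivially so every orbit has dimension at most $\dim G_d-1$. If $q_Q$ were not positive definite, some $0\neq d\in\mathbb{N}^{Q_0}$ (supported on a connected subquiver) satisfies $q_Q(d)\leq 0$, so $\operatorname{rep}(Q,d)$ has no dense orbit and, being irreducible, must have infinitely many orbits; decomposing each representation into indecomposables, all of dimension vector $\leq d$, then forces infinitely many indecomposables, contradicting the hypothesis.

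For the converse I would invoke the Bernstein--Gelfand--Ponomarev reflection functors. At a sink (resp.\ source) $i$ of $Q$, the functor $S_i^{+}$ (resp.\ $S_i^{-}$) together with the reflected quiver $\sigma_iQ$ gives a bijection between the indecomposables of $kQ$ not isomorphic to the simple $S_i$ and those of $k\sigma_iQ$ with the same property, acting on dimension vectors by the simple reflection $s_i$ on $\Z^{Q_0}$. Composing these along an admissible ordering $i_1,\dots,i_n$ of the vertices realizes the Coxeter transformation $c=s_{i_n}\cdots s_{i_1}$; when $q_Q$ is positive definite, the group generated by the $s_i$ is the finite Weyl group of $q_Q$, so $c$ has finite order and every indecomposable is sent, after finitely many reflection steps, to a simple module. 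Hence $\underline{\dim}$ induces a bijection between indecomposables and the positive roots of $q_Q$ (the vectors $d\in\mathbb{N}^{Q_0}$ with $q_Q(d)=1$), and a positive definite integral form has only finitely many of those, so $A$ is representation-finite.

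It then remains to identify the connected quivers with $q_Q$ positive definite; as $q_Q$ depends only on the underlying graph $\overline{Q}$, this is the combinatorial classification of connected graphs carrying a positive definite simply-laced form, whose answer is precisely $\A_n$, $\D_n$, $\E_6$, $\E_7$, $\E_8$. One proves it by producing, for every connected graph outside this list, an explicit $0\neq d\geq 0$ with $q_Q(d)\leq 0$: the extended Dynkin diagrams $\widetilde{\A}_n,\widetilde{\D}_n,\widetilde{\E}_6,\widetilde{\E}_7,\widetilde{\E}_8$ carry such a ``radical'' vector, and any connected graph that is not Dynkin contains one of them as a subgraph. The main obstacle is the reflection-functor step: verifying that $S_i^{\pm}$ restricts to a well-defined bijection on indecomposables and that the iteration terminates exactly because positive definiteness makes the Weyl group finite — this is the technical core of Gabriel's theorem and the only place where the hypothesis is genuinely exploited.
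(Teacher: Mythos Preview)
The paper does not prove this theorem at all: it is quoted as a well-established fact with a reference to \cite{ASS}, and is used only as a black box in the subsequent arguments on generalized path algebras. So there is no ``paper's own proof'' to compare against.

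That said, your outline is the standard Bernstein--Gelfand--Ponomarev/Tits-form proof of Gabriel's theorem and is essentially correct. The geometric count $\dim\operatorname{rep}(Q,d)-\dim G_d=-q_Q(d)$ together with the trivially acting scalars gives the ``no dense orbit'' argument when $q_Q(d)\leq 0$, and the reflection-functor machinery reduces the positive-definite case to the finiteness of positive roots. One small point worth tightening: from ``$q_Q$ not positive definite'' you need a \emph{nonnegative integral} vector $d\neq 0$ with $q_Q(d)\leq 0$; this is true for these unit forms but deserves a sentence (e.g.\ pass to the minimal Euclidean subgraph and use its radical vector, which has strictly positive entries). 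Also, the step ``infinitely many orbits in $\operatorname{rep}(Q,d)$ $\Rightarrow$ infinitely many indecomposables'' uses Krull--Schmidt and the fact that only finitely many dimension vectors are $\leq d$, which you state correctly. For the purposes of this paper, however, simply citing \cite{ASS} (or Gabriel's original paper) would match what the authors actually do.
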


The graphs listed in the above theorem are called {\bf Dynkin} and we shall keep in the sequence the notation and the numbering established there. 

If $\Gamma$ is of type $\A_1$ and $\Aa = \{ A_1 \}$, then by the remark made in subsection  \ref{sub:gpalgebras}, $\Lambda \cong A_1$ and so, $\Lambda$ is representation-finite if and only if $A_1$ is. Next result deals with $\Gamma$  different from $\A_1$. 
\vspace{.3 cm} 

\begin{theorem} \label{repfinite}
Let $\Lambda = k(\Gamma, \Aa) \cong kQ/I$ be a gp-algebra with $|\Gamma_0| \geq 2$ and its realization as a quotient of a path algebra. Assume that none of the ordinary quivers $\Sigma_i$ of the algebras $A_i$ in $\Aa$ have loops. Then $\Lambda$ is representation-finite if and only if the underlying graph of $\Gamma$ is Dynkin and 
\benu
\item[(i)] If $\Gamma$ is of type $\D_n$ ($n \geq 4$),  $\E_6, \E_7$ or $ \E_8$, then $A_i = k$ for each $i \in \Gamma_0$.
\item[(ii)] If $\Gamma$ is of type $\A_2$, then either 
\begin{enumerate}
\item $A_1 = k= A_2$; or
\item one and only one of $A_1$ or $A_2$ is $k^l$ with $l = 2, 3$ and the other is $k$.
\end{enumerate} 
\item[(iii)] If  $\Gamma$ is of type $\A_n$ with $n \geq 3$, then either \begin{enumerate}
\item $A_i = k$ for each $i \in \Gamma_0$; or 
\item one and only one of $A_1$ or $A_n$ is $k^2$ and all the others algebras are $k$.
\end{enumerate} 
\enu
Moreover, in this case, $I = 0$ and $\Lambda$ is a path algebra of Dynkin type. 
\end{theorem}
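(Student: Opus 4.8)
The plan is to prove the two implications separately. The ``if'' direction is a short verification, and the ``only if'' direction proceeds in three stages: first forcing $\overline{\Gamma}$ to be Dynkin, then forcing every $A_i$ to be semisimple (hence $A_i\cong k^{p_i}$ and $I=0$), and finally a purely graph-theoretic analysis deciding which of the resulting path algebras is representation-finite.

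For sufficiency, observe that in each of (i), (ii), (iii) every $A_i$ is $k^{p_i}$, so each $\Sigma_i$ has no arrows, each $I_i=0$, and hence $I=0$ by the construction of $Q$ recalled in Section~\ref{prel}; thus $\Lambda\cong kQ$, where $Q$ is obtained from $\Gamma$ by replacing each vertex $i$ with $p_i$ vertices and each arrow $i\to j$ by a complete bipartite family of arrows between the two blocks. I would then just read off $\overline{Q}$: it is $\overline{\Gamma}$ in case (i); it is $\A_2$, $\A_3$ or $\D_4$ in case (ii); and it is $\A_n$ or $\D_{n+1}$ in case (iii). In every case $\overline{Q}$ is Dynkin, so Theorem~\ref{dynkin} gives that $\Lambda$ is representation-finite and is a path algebra of Dynkin type.

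For necessity, assume $\Lambda$ is representation-finite; since $\Lambda$ is indecomposable, $\Gamma$ is connected, and as $|\Gamma_0|\ge 2$ every vertex of $\Gamma$ has a neighbour. First I would invoke Proposition~\ref{prop:rep}: it embeds $\md k\Gamma\simeq\mathrm{rep}_k(\Gamma)$ fully and faithfully into $\mathrm{rep}_k(Q,I)\simeq\md\Lambda$, and a full and faithful functor sends pairwise non-isomorphic indecomposables to pairwise non-isomorphic indecomposables, so $k\Gamma$ is representation-finite; since $\Gamma$ has no oriented cycles this is a genuine path algebra, and Theorem~\ref{dynkin} forces $\overline{\Gamma}$ to be Dynkin. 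Next I claim each $A_i\cong k^{p_i}$. Suppose some $\Sigma_i$ had an arrow $\rho\colon a\to b$ (with $a\ne b$, since there are no loops). Deleting from $\Sigma_i$ every arrow except $\rho$ yields a radical-square-zero — hence relation-free — quotient of $A_i$, namely $k(\bullet\xrightarrow{\rho}\bullet)\times k^{p_i-2}$; combining this with the semisimple quotients $A_l\twoheadrightarrow A_l/\rad A_l$ at the other vertices (gp-algebras depending surjectively on the family of vertex algebras) produces a quotient $\Lambda_0$ of $\Lambda$, hence representation-finite, whose vertex algebras are all hereditary. Therefore $\Lambda_0\cong kQ_0$ is hereditary, and picking a $\Gamma$-neighbour $j$ of $i$ and a vertex $c$ over $j$, the construction of $Q_0$ places in $\overline{Q_0}$ a triangle on $\{a,b,c\}$ (the edge coming from $\rho$ together with the two type-I edges joining $c$ to $a$ and $b$); so $\overline{Q_0}$ is not a disjoint union of Dynkin graphs and $kQ_0$ is representation-infinite, a contradiction. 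Hence all $A_i\cong k^{p_i}$, all $I_i=0$, so $I=0$ and $\Lambda\cong kQ$ is the hereditary path algebra of the blow-up quiver $Q$ described above.

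What remains is to decide, for $\overline{\Gamma}$ Dynkin, which blow-ups $\overline{Q}$ are again Dynkin — equivalently, by Theorem~\ref{dynkin}, when $\Lambda$ is representation-finite — and this is where the bulk of the (routine) work lies. The steps I would carry out: if an interior vertex $i$ of $\overline{\Gamma}$ has $p_i\ge 2$, then two vertices over $i$ together with one vertex over each of two neighbours span a $4$-cycle $\tilde{\A}_3$, so all interior blow-up numbers are forced to be $1$; if $\overline{\Gamma}$ is of type $\D_n$ or $\E_m$ it has a vertex of degree three, and any $p_i\ge 2$ then creates a $\tilde{\A}_3$, a vertex of degree $\ge 4$, or a graph with two branch vertices (a $\tilde{\D}$), whence all $A_i=k$, giving (i); if $\overline{\Gamma}=\A_2$ then $\overline{Q}=K_{p_1,p_2}$, which is Dynkin exactly when $\{p_1,p_2\}\in\{\{1,1\},\{1,2\},\{1,3\}\}$, giving (ii); and if $\overline{\Gamma}=\A_n$ with $n\ge 3$, the interior constraint leaves only $p_1,p_n$, where both $\ge 2$ gives a double fork $\tilde{\D}$, one of them $\ge 3$ gives a vertex of degree $\ge 4$, and exactly one equal to $2$ gives $\D_{n+1}$, which yields (iii); in the surviving cases $\overline{Q}$ is Dynkin, so $I=0$ and $\Lambda=kQ$ is a path algebra of Dynkin type, completing the proof. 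I expect the main obstacle to be not any single deep point but precisely the reduction to the hereditary case: one must be careful — deleting arrows inside the offending $\Sigma_i$ and passing to semisimple quotients of the remaining $A_l$, so that, in the spirit of Remark~\ref{typeI}, no relations survive — to guarantee that the test algebras to which Theorem~\ref{dynkin} is applied really are path algebras, after which the classification is immediate.
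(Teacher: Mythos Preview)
Your proof is correct and follows the same overall strategy as the paper --- use Proposition~\ref{prop:rep} to force $\overline{\Gamma}$ to be Dynkin, then rule out the non-listed configurations by exhibiting non-Dynkin subgraphs in the ordinary quiver --- but organizes the reduction differently. The paper isolates a separate Lemma~\ref{lemrepfinite} which, still allowing the $A_i$ to be arbitrary, shows that any vertex $i$ with $\dim_k A_i\ge 2$ must be an end point of some $\A_n$; only afterwards, inside the proof of the theorem, does it argue that this end-point algebra must have $\Sigma_i$ arrowless (hence $A_i=k^m$) and then bound $m$. Throughout, the paper works directly inside $Q$ and uses Remark~\ref{typeI} (a subgraph with at most one type~II edge carries no relations from $I$) to certify that each offending subquiver really defines a hereditary, hence wild or tame, subproblem. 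You instead front-load the semisimplicity step: by passing to a quotient $\Lambda_0$ of $\Lambda$ with hereditary vertex algebras you obtain a genuine path algebra $kQ_0$ in one stroke, and the rest of your argument is purely graph-theoretic on the blow-up quiver. Your route is a bit more uniform and avoids the repeated ``at most one type~II edge'' bookkeeping, at the price of invoking the (easy but not stated in the paper) functoriality of the gp-construction under vertex-wise surjections; the paper's route keeps everything inside the original $Q$ and leans on Remark~\ref{typeI} in its place. Both yield the same classification and the same final observation that $I=0$.
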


Observe that if (i) holds, then $\Lambda$ is indeed a path algebra. In order to prove the theorem, we shall analise how the dimensions of the algebras $A_i \in \Aa$ ($i \in \Gamma_0$) afects the construction of the quiver $Q$. For convenience, we start with a lemma. 
\vspace{.3 cm}

\begin{lemma}  \label{lemrepfinite}
Let $\Lambda = k(\Gamma, \Aa) \cong kQ/I$ be a representation-finite gp-algebra with $|\Gamma_0| \geq 2$ and its realization as a quotient of a path algebra. Assume that none of the ordinary quivers $\Sigma_i$ of the algebras $A_i$ in $\Aa$ have loops and let $i \in \Gamma_0$. If dim$_kA_i \geq 2$,  then $i$ is an end point of $\Gamma$  and $\Gamma$ is of type $\A_n$ for some $n \geq 2$. Moreover, only one of the two end points $i$ of $\A_n$ satisfies dim$_kA_i \geq 2$.
\end{lemma}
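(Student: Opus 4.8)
The plan is to let the representation‑finiteness of $\Lambda$ dictate the shape of $Q$, by repeatedly exhibiting subquivers of $Q$ whose path algebras are representation‑infinite. Two facts will be used throughout. First, every quotient of a representation‑finite algebra is again representation‑finite, its module category being a full subcategory of the larger one; in particular, deleting vertices of $Q$, and then deleting arrows, always yields a quotient of $\Lambda$. Second --- and here the construction of $Q$ recalled above is essential --- the admissible ideal $J$ with $\Lambda\cong kQ/J$ is generated by the (lifted) admissible ideals $I_i$ of the $A_i$, so every generator of $J$ is a combination of paths lying entirely inside a single $\Sigma_i$, hence supported on arrows of type II. Consequently, if $Q''$ is a subquiver of $Q$ all of whose arrows are of type I, the relations induced on $Q''$ vanish, so $kQ''$ is a quotient of $\Lambda$; being finite dimensional $Q''$ has no oriented cycle, and being representation‑finite, each connected component of $\overline{Q''}$ is one of the Dynkin graphs of Theorem~\ref{dynkin}.

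I would first record two easy facts. Since $\Sigma_i$ has no loops, $\dim_k A_i\geq 2$ is equivalent to $|(\Sigma_i)_0|\geq 2$ (a loopless quiver with a single vertex gives $A_i=k$). And, applying Proposition~\ref{prop:rep} together with the elementary observation that a full and faithful $k$‑linear functor between module categories --- which are Krull--Schmidt --- sends pairwise non‑isomorphic indecomposables to pairwise non‑isomorphic indecomposables, $k\Gamma$ is representation‑finite, whence $\overline{\Gamma}$ is a Dynkin graph; in particular $\overline{\Gamma}$ is a tree without multiple edges.

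The engine of the argument is a ``blow‑up'' of subtrees. Given a subtree $T$ of $\overline{\Gamma}$ and, for each vertex $u$ of $T$, a subset $V_u\subseteq(\Sigma_u)_0$, I would take the subquiver $Q''$ of $Q$ with vertex set $\bigsqcup_u V_u$ whose arrows are, for every edge $u-v$ of $T$, the unique type I arrow between $a$ and $b$ for each $(a,b)\in V_u\times V_v$; since $\overline{\Gamma}$ is a tree, the full subquiver of $Q$ on these vertices has besides these only arrows of type II, which we discard, so $Q''$ is a type I subquiver and each component of $\overline{Q''}$ is Dynkin. Now fix distinct $a_1,a_2\in(\Sigma_i)_0$ and apply this three times. (1) If $i$ is not an end point of $\Gamma$ it has two distinct neighbours $j_1,j_2$; with $T$ the path $j_1-i-j_2$, $|V_i|=2$ and $|V_{j_1}|=|V_{j_2}|=1$, the graph $\overline{Q''}$ is a $4$‑cycle, the Euclidean diagram $\widetilde{\A}_3$ --- a contradiction; so $i$ is an end point. (2) If $\overline{\Gamma}$ were of type $\D_n$, $\E_6$, $\E_7$ or $\E_8$, let $v$ be its unique vertex of degree $3$; since $i$ is an end point, $i\neq v$, there is a path $i=v_0-v_1-\cdots-v_m=v$ in $\overline{\Gamma}$ with $m\geq 1$, and $v$ has two further neighbours $w_1\neq w_2$ off this path. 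With $T$ this path together with the edges $v-w_1$, $v-w_2$, $|V_i|=2$ and all other $V$'s of size $1$, one gets $\overline{Q''}=\widetilde{\D}_{m+3}$ --- a contradiction. Hence $\overline{\Gamma}$ is of type $\A_n$, and $n\geq 2$ as $|\Gamma_0|\geq 2$. (3) If both end points $1,n$ of $\A_n$ had $\dim_k A\geq 2$: for $n=2$ they are adjacent and the blow‑up of the single edge $1-2$ with $|V_1|=|V_2|=2$ is again a $4$‑cycle; for $n\geq 3$ the blow‑up of $T=\Gamma$ with $|V_1|=|V_n|=2$ and $|V_l|=1$ for $2\leq l\leq n-1$ gives $\overline{Q''}=\widetilde{\D}_{n+1}$; both are contradictions. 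Therefore exactly one end point, namely $i$, satisfies $\dim_k A_i\geq 2$, which completes the proof.

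The step I expect to need the most care is the reduction in the first paragraph: one must justify precisely that restricting to a subquiver made only of type I arrows annihilates all induced relations, which rests on the explicit description of $J$ from \cite{ICNLP} (this is the phenomenon behind Remark~\ref{typeI}). Once that is secured, the rest is routine bookkeeping --- checking that the various blow‑ups $Q''$ really have underlying graphs $\widetilde{\A}_3$ or $\widetilde{\D}_m$, and that the vertices chosen in the sets $V_u$ are pairwise distinct --- both immediate from $\overline{\Gamma}$ being a tree and the sets $(\Sigma_u)_0$ being pairwise disjoint.
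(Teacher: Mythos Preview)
Your proof is correct and follows essentially the same route as the paper's: after using Proposition~\ref{prop:rep} to force $\overline{\Gamma}$ to be Dynkin, you exhibit in each forbidden configuration a subquiver of $Q$ consisting only of type~I arrows whose underlying graph is Euclidean ($\widetilde{\mathbb{A}}_3$ when $i$ is interior or when $n=2$ with both ends large, $\widetilde{\mathbb{D}}_m$ otherwise), exactly as the paper does. Your ``blow-up'' language and the explicit justification that type~I subquivers carry no induced relations make the argument more self-contained than the paper's version, which leaves that reduction implicit via Remark~\ref{typeI}, but the underlying constructions and contradictions are identical.
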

\begin{proof}
Since $\Lambda$ is representation-finite, then, by Proposition \ref{prop:rep},  $\Gamma$ is a Dynkin quiver. Now, since $Q$ has no loops, and since we are assuming that dim$_k A_i \geq 2$, we infer that $|(\Sigma_i)_0| \geq 2$. 
If $i$ is not an end point of $\Gamma$, then $\overline{\Gamma}$ will contain a subgraph of type $\A_3$ with $i$ as the middle vertex.
So $Q$ will have a subquiver with only arrows of type I whose underlying graph is 
\begin{displaymath}
\xymatrix{ &  \bullet \ar@{-}[dr] & & \\ \bullet \ar@{-}[ur] \ar@{-}[dr] &  & \bullet \\ & \bullet \ar@{-}[ur] & & }
\end{displaymath}
a contradiction to $\Lambda$ being representation-finite. \\
Now, if $\Gamma$ is of type $\D_n, \E_6, \E_7, \E_8$ with $i$  an end point, then  $\oQ$ would have a subgraph of the type:
{\small \begin{displaymath}
\xymatrix{\bullet \ar@{-}[dr]&&& &   \bullet \\
 &\ \ \ \bullet \ \ a& \cdots &  b\ \ 
 \ar@{-}[dr] \ar@{-}[ur] \bullet&  \\
\bullet \ar@{-}[ur]&&& &   \bullet }
\end{displaymath} }

where the points $a$ and $b $ could coincide or be linked by a sequence of edges  of type I. This also contradicts the fact that $\Lambda$ is representation-finite. It remains to show the last statement. For that, suppose $\Gamma$ is of type $\A_n, n\geq 2$ and suppose dim$_k A_1 \geq 2$ and dim$_k A_n \geq 2$. If $n=2$, then $\oQ$ will have a subpath of the kind
\begin{displaymath}
\xymatrix{\bullet \ar@{-}[r] \ar@{-}[dr] & \bullet \\
\bullet \ar@{-}[r] \ar@{-}[ur] & \bullet}
\end{displaymath}
with only arrows of type I and, if $n > 2$, then $\oQ$ will have a subpath of the kind
\begin{displaymath}
\xymatrix{\bullet \ar@{-}[dr]&&& &   \bullet \\
 &\bullet \ar@{-}[r]& \cdots \ar@{-}[r] & 
 \ar@{-}[dr] \ar@{-}[ur] \bullet&  \\
\bullet \ar@{-}[ur]&&& &   \bullet }
\end{displaymath}
with only arrows of type I. In both cases, this would imply that $\Lambda$ is not representation-finite, a contradiction to the hypothesis. This finishes the proof. 
\end{proof}

We shall now prove Theorem \ref{repfinite}.

\begin{proof} (Theorem \ref{repfinite}). 
Assume $\Lambda$ is representation-finite. By Proposition~\ref{prop:rep} and Lemma \ref{lemrepfinite}, $\Gamma$ is of Dynkin type and, if $\Gamma$ is of type $\D_n$ ($n \geq 4$),  $\E_6, \E_7$ or $ \E_8$, then the algebras $A_i$ would be equal to $k$ for any vertex $i \in \Gamma_0$, which is the same to say that $\Lambda$ is indeed a path algebra. This proves (i) and the last statement for these cases. \\
It remains to consider the case which $\Gamma$ is of type $\A_n$, with $n \geq 2$. By Lemma \ref{lemrepfinite}, at most one of the two end points, $1$ or $n$, of $\Gamma$ has its assigned algebra with dimension greater or equal to 2. Without loss of generality, assume $A_j = k$ for $j = 1, \cdots, n-1$ and that dim$_k A_n = m$ and write $A_n = k\Sigma_n/I_n$. Suppose $\Sigma_n$ has an arrow $\alpha$. Since $Q$ has no loops, we infer that $|(\Sigma_n)_0| \geq 2$ and then $\overline{Q}$ has a subgraph 
{\small \begin{displaymath}
\xymatrix{& \bullet \ar@{-}[dd]\\
\bullet \ar@{-}[ur] \ar@{-}[dr] & \ \ \ \ \ \overline{\alpha}\\
& \bullet}
\end{displaymath} }
where all edges except $\overline{\alpha}$ are of type I (and so, by Remark \ref{typeI}, without relations in the corresponding subquiver). This contradicts the fact that $\Lambda$ is representation-finite. So, $\Sigma_n$ has no arrows and $A_n = k^m$. \\
If now $m \geq 4$, then $\overline{Q}$ will have a subgraph 

{\small \begin{displaymath}
	\xymatrix{ && \bullet \\
	&& \bullet \\
	\bullet \ar@{-}[urr] \ar@{-}[uurr] \ar@{-}[drr] \ar@{-}[ddrr] && \\
	&& \bullet \\
	&& \bullet}
\end{displaymath}	}

with all arrows of type I, also a contradiction to the representation-finiteness of $\Lambda$. If $n =2$, this gives (ii). If now $n \geq 3$, then $m \leq 2$, otherwise, $\oQ$ will also have a subgraph 
{\small \begin{displaymath}
\xymatrix{ & &  \bullet \\
\bullet \ar@{-}[r] &\bullet \ar@{-}[r] \ar@{-}[ur]\ar@{-}[dr] & \bullet  \\
& &  \bullet }
\end{displaymath}  }
with all arrows of type I, and again, $\Lambda$ would not be representation-finite. This shows (iii). Last statement is now easy to check. \\
Conversely, assume that (i) (ii) or (iii) holds. Then $Q$ is Dynkin and so $\Lambda$ is representation-finite because of Theorem \ref{dynkin}, completing the proof of the theorem.
\end{proof}


\section{Tameness}\label{tameness}

Now, we concentrate on algebras of tame type. We shall not give a formal definition of tameness,  referring to \cite{Barot,Kir} for it. We shall however base our considerations on the theorem below which characterizes the path algebras of tame type (see \cite{Kir} for a proof) and prove a result in the same fashion of the one in Section 3. We shall use the expression {\bf strict tame} for a tame algebra which is not representation-finite. 

\vspace{.3 cm}

\begin{theorem} \label{euclidean}
Let $A = kQ$ be a path algebra given by $Q$.
Then $A$ is of strict tame type if and only if the underlying graph of $Q$ is one of the following\\
{\small \begin{displaymath}
\xymatrix{&& n+1 \ar@{-}[drrr]&&&& \\
\tilde{\mathbb{A}}_n: & 1 \ar@{-}[ur] \ar@{-}[r] & 2 \ar@{-}[r] & 3 \ar@{-}[r] & \cdots \ar@{-}[r] & n & (n\geq 1)}
\end{displaymath}
\begin{displaymath}
\xymatrix{& 1\ar@{-}[dr]&&&&&n& \\
\tilde{\mathbb{D}}_n: &  & 3 \ar@{-}[r] & 4 \ar@{-}[r] & \cdots \ar@{-}[r] & n-1 \ar@{-}[ur] \ar@{-}[dr]&& (n\geq 4)\\
& 2 \ar@{-}[ur]&&&&&n+1&}
\end{displaymath}
\begin{displaymath}
\xymatrix{& & &4 \ar@{-}[d]&& \\
& & &5 \ar@{-}[d]&& \\
\tilde{\mathbb{E}}_6: & 1 \ar@{-}[r]& 2 \ar@{-}[r] & 3 \ar@{-}[r] & 6 \ar@{-}[r] & 7}
\end{displaymath}
\begin{displaymath}
\xymatrix{& & &&5 \ar@{-}[d]&& \\
\tilde{\mathbb{E}}_7: & 1 \ar@{-}[r]& 2 \ar@{-}[r] & 3 \ar@{-}[r] & 4 \ar@{-}[r] & 6 \ar@{-}[r] & 7 \ar@{-}[r]& 8}
\end{displaymath}
\begin{displaymath}
\xymatrix{& & &4 \ar@{-}[d]&& &&\\
\tilde{\mathbb{E}}_8: & 1 \ar@{-}[r]& 2 \ar@{-}[r] & 3 \ar@{-}[r] & 5 \ar@{-}[r] & 6 \ar@{-}[r] & 7 \ar@{-}[r]& 8 \ar@{-}[r]& 9}
\end{displaymath} }
\end{theorem}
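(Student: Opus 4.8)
The plan is to classify the strict tame gp-algebras $\Lambda = k(\Gamma, \Aa)$ by mirroring exactly the strategy of Section \ref{finiteness}, using Theorem \ref{euclidean} in place of Theorem \ref{dynkin}. The guiding principle is that a gp-algebra is strict tame precisely when its ordinary quiver $Q$ has underlying graph of extended Dynkin (Euclidean) type, and the bulk of the work is translating conditions on the algebras $A_i$ and the quiver $\Gamma$ into conditions on $\overline{Q}$. I would first record the analogue of Lemma \ref{lemrepfinite}: if $\Lambda$ is tame and $\dim_k A_i \geq 2$ at some vertex $i \in \Gamma_0$, then the presence of extra type-II vertices forces a subgraph of $\overline{Q}$ that is strictly wild (i.e.\ neither Dynkin nor Euclidean), contradicting tameness. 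Since every proper connected supergraph of a Euclidean graph is wild, the local branching configurations appearing in the proof of Lemma \ref{lemrepfinite} (a vertex with three or more type-I neighbors, or a Euclidean graph with an extra edge grafted at an end point) now rule out all but a short list of possibilities.

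Next I would carry out the forward direction by cases on the underlying graph of $\Gamma$. First I would treat the case where $\Gamma$ itself is \emph{Dynkin}: here the gp-algebra fails to be representation-finite only by enlarging some $A_i$, so by the refined lemma the extra vertices must push $\overline{Q}$ from a Dynkin graph up to exactly a Euclidean graph, and I would enumerate the finitely many ways this can happen (for instance, replacing a $k$ at an end point of an $\A_n$ by $k^2$ or by a two-vertex algebra yields $\tilde{\A}_n$-type behaviour, while at a branch vertex one lands in a $\tilde{\D}$ or $\tilde{\E}$ pattern), always checking against the list in Theorem \ref{euclidean}. Second I would treat the case where $\overline{\Gamma}$ is \emph{already Euclidean}: then by the lemma every $A_i$ must equal $k$ (any enlargement would produce a proper Euclidean supergraph, hence a wild graph), so $\Lambda$ is the path algebra $k\Gamma$ and is strict tame by Theorem \ref{euclidean}. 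Throughout, Remark \ref{typeI} guarantees that the type-I edges carry no relations, so the relevant subquivers of $Q$ are genuinely path algebras of the displayed underlying graphs, which is what lets me invoke the representation-type dichotomy for $kQ$ directly.

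The converse is the easy direction: in each listed configuration I would verify that $\overline{Q}$ is one of the Euclidean graphs of Theorem \ref{euclidean} and that the only possible relations (coming from the unique algebra $A_i$ of dimension $\geq 2$) do not obstruct tameness, concluding that $\Lambda \cong kQ/I$ is strict tame. As in Theorem \ref{repfinite}, I expect that whenever $\dim_k A_i \geq 2$ the relevant $\Sigma_i$ has no arrows, so $A_i = k^m$ and $I = 0$, reducing $\Lambda$ to an honest path algebra of Euclidean type.

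The main obstacle will be the bookkeeping in the forward direction: I must be exhaustive about how a type-II enlargement at each kind of vertex (end point versus interior versus branch point) of each Dynkin $\Gamma$ transforms $\overline{Q}$, and I must rule out the many \emph{wild} supergraphs while retaining exactly the Euclidean ones. The delicate sub-case is the passage from $\A_n$ to $\tilde{\A}_n$ versus $\tilde{\D}_n$: enlarging an end point can either lengthen the graph into a cycle-like $\tilde{\A}$ pattern or create a fork that must be matched precisely against the $\tilde{\D}_n$ diagram, and one must pin down the admissible values of $m = \dim_k A_i$ (analogous to the $l = 2,3$ and $m \leq 2$ thresholds in Theorem \ref{repfinite}) so that no forbidden branch vertex of valence $\geq 3$ with a fourth neighbor, nor any overlong tail, appears. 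Once this case analysis is organized by the valence of the enlarged vertex and checked against the five Euclidean families, the remaining verifications are routine.
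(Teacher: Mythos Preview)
You have misidentified the target. Theorem~\ref{euclidean} is the classical characterization of strict tame \emph{path} algebras $kQ$ (not gp-algebras); the paper does not prove it at all but simply quotes it from \cite{Kir}. Your proposal is not an attempt to prove Theorem~\ref{euclidean}: you repeatedly \emph{invoke} Theorem~\ref{euclidean} as a tool, and what you actually sketch is a proof of Theorem~\ref{reptame}, the classification of strict tame gp-algebras $k(\Gamma,\Aa)$.

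Viewed as a plan for Theorem~\ref{reptame}, your outline is close in spirit to the paper's (a preparatory lemma constraining vertices with $\dim_k A_i\geq 2$, then a case split on the type of $\Gamma$), but it contains a genuine gap. You assert that ``whenever $\dim_k A_i \geq 2$ the relevant $\Sigma_i$ has no arrows, so $A_i = k^m$ and $I = 0$''. This is false: in case (v)(b) of Theorem~\ref{reptame}, with $\Gamma=\A_2$, the algebra $A_1$ (or $A_2$) is allowed to be $k\A_2$ or the radical-square-zero algebra on the quiver $a\rightleftarrows b$, so $\Sigma_1$ has arrows and in the latter case $I\neq 0$. The paper isolates exactly this situation in Lemma~\ref{lem:exception} and settles it with a Galois-covering argument to show wildness unless $I=\langle\alpha\beta,\beta\alpha\rangle$; your plan, which reduces everything to recognizing Euclidean underlying graphs of honest path algebras, cannot see this exception and would either miss these tame gp-algebras or wrongly declare some of the $B_I$ tame.
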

\vspace{.3 cm}

It should be observed  that since we are assuming that the algebra $A$ is finite dimensional, then it is not possible to have oriented cycles in the quiver $Q$ in case the underlying graph is ${\tilde{\A}}_n$. All other possibilities of orientations are permitted. The graphs listed in the above theorem are called {\bf Euclidean} and, as before, we shall keep their numbering for the sequel. 


Theorem~\ref{euclidean} will be our main tool for deciding whether or not an algebra is tame. However, we will later run across an exceptional family of algebras whose representation-type will require additional, more involved techniques to analyse.Therefore what we do next is to deal with this exceptional case.

Let $B_I$ be the algebra given by the quiver 

\begin{displaymath}
\xymatrix{\bullet \ar@<.5ex>[dd]^{\alpha}& & \\
&& \bullet \ar[ull] \ar[dll] \\
\bullet  \ar@<.5ex>[uu]^{\beta}&&}
\end{displaymath}
bound by an admissible  ideal $I$ which is generated by paths involving only the arrows $\alpha$ and $\beta$.

\begin{lemma} \label{lem:exception}
The algebras of the kind $B_I$ and $B_I^{op}$ are not representation-finite and they are tame if and only if $I = < \alpha \beta, \beta \alpha >$.
\end{lemma}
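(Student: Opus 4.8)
The plan is to analyze the algebra $B_I$ via its quiver, which (ignoring relations) has underlying graph a triangle at the top two vertices plus a third vertex joined to both, i.e.\ a graph on three vertices with a double edge between two of them; the admissible ideal $I$ only constrains the double edge $\alpha,\beta$. First I would observe that $B_I$ is never representation-finite: regardless of $I$, the quotient always admits as a quotient algebra (or contains as a convex subcategory, after killing the third vertex and one of the outgoing arrows, or after suitable idempotent truncation) a Kronecker-type configuration, and more precisely one checks directly that $B_I$ is not one of the Dynkin algebras of Theorem~\ref{dynkin}; since the underlying graph has a cycle and a double edge it cannot be Dynkin, and an algebra whose ordinary quiver's underlying graph contains a Euclidean (in fact already non-Dynkin) subgraph on a convex subset is not representation-finite. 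The same argument applies verbatim to $B_I^{op}$, whose quiver has the same underlying graph.

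Next, for the tameness dichotomy, the key point is the case analysis on the admissible ideal $I$, which is generated by paths in $\alpha,\beta$ alone. Since $\alpha\beta$ and $\beta\alpha$ are the only paths of length $2$ using these arrows and $I$ is admissible, $I$ must contain all sufficiently long powers of $\alpha\beta$ and of $\beta\alpha$; the possible admissible ideals are $\langle(\alpha\beta)^m,(\beta\alpha)^n\rangle$ type ideals together with possibly $\alpha\beta$ or $\beta\alpha$ individually, up to the relevant combinations. I would split into: (a) $I=\langle\alpha\beta,\beta\alpha\rangle$, and (b) every other admissible $I$ (so $I$ fails to contain $\alpha\beta$ or fails to contain $\beta\alpha$, hence some path of length $2$ or more in $\alpha,\beta$ survives). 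In case (b), I claim $B_I$ contains, as a convex subcategory or bound-quiver quotient that reflects representation type, an algebra of strictly wild type — concretely, a path or bound-path algebra on the three vertices in which the loop-like behaviour at the $\{\alpha,\beta\}$ edge combines with the other two arrows to produce a quiver whose underlying graph properly contains $\tilde{\mathbb{D}}_4$ or $\tilde{\mathbb{E}}$, or more directly a known wild bound quiver; by Theorem~\ref{euclidean} and the standard fact that such an algebra is wild, $B_I$ is not tame. In case (a), where $(\alpha\beta)=(\beta\alpha)=0$, I would exhibit $B_I$ explicitly as a tame algebra, either by recognizing it in the Bongartz/Ringel lists of tame algebras (it is a gentle or special biserial algebra — the relations $\alpha\beta=\beta\alpha=0$ at a double edge, with the remaining two arrows, give a string algebra, hence tame) or by a direct covering/geometric argument.

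The main obstacle is case (a): showing positively that $B_I$ with $I=\langle\alpha\beta,\beta\alpha\rangle$ (and its opposite) is tame. Unlike representation-finiteness, which is settled by exhibiting the right non-Dynkin subgraph, tameness cannot be read off a subquiver and genuinely requires either identifying the algebra within a known class of tame algebras — the cleanest route is to verify it is a string (special biserial) algebra: one checks the defining conditions (at most two arrows in/out of each vertex, and for each arrow at most one arrow composable with it on each side modulo the relations), which hold precisely because the only relations are the two length-two paths at the double edge — and then invoke the theorem that finite-dimensional string algebras are tame. I expect the bulk of the write-up to be this verification plus the bookkeeping in case (b) to pin down which surviving path produces a wild configuration; the non-representation-finiteness and the reduction of the shape of $I$ are routine.
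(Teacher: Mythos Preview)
Your plan for the ``not representation-finite'' part and for the tame direction (case (a)) matches the paper: the underlying graph of $Q_I$ contains an $\tilde{\mathbb{A}}_2$ subquiver free of relations, and when $I=\langle\alpha\beta,\beta\alpha\rangle$ the algebra is a string (special biserial) algebra, hence tame by Butler--Ringel. So far so good.

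The gap is in case (b). Your proposal is to locate, inside $B_I$ itself, a convex subcategory or quotient that is hereditary wild --- you even suggest a subgraph containing $\tilde{\mathbb{D}}_4$ or $\tilde{\mathbb{E}}$. But $Q_I$ has only three vertices, so it cannot contain $\tilde{\mathbb{D}}_4$ (five vertices) or any $\tilde{\mathbb{E}}$ as a subquiver; and any idempotent truncation that kills the vertex $c$ leaves a two-vertex algebra with the $\alpha,\beta$ relations, which is a Nakayama-type algebra and not wild. You would need a genuinely different mechanism to manufacture a wild hereditary piece from only three vertices, and the proposal does not supply one.

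The paper resolves this by passing to a Galois covering of $(Q_I,I)$: unrolling the $\alpha,\beta$ cycle gives an infinite quiver with vertices $a_i,b_i,c_i$, and if (say) $\alpha\beta\notin I$ then one finds a four-vertex full subquiver $\{c_i,a_i,b_i,a_{i-1}\}$ whose underlying graph is a triangle with a pendant edge --- neither Dynkin nor Euclidean --- and on which no relation survives. Hence the category of representations of the cover is wild, and covering-theoretic results (Dowbor--Skowro\'nski, de la Pe\~na) transfer wildness back down to $B_I$. This covering step is exactly the ``missing idea'' your plan needs; without it, the argument that $B_I$ is wild when $\alpha\beta\notin I$ or $\beta\alpha\notin I$ does not go through.
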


\begin{proof}
We shall prove the statement for algebras of the type $B_I$, the other case being similar. 
Denote the vertices of the ordinary quiver $Q_I$ of $B_I$ as follows:
\begin{displaymath}
\xymatrix{a \ar@<.5ex>[dd]^{\alpha}& & \\
&& c \ar[ull] \ar[dll] \\
b \ar@<.5ex>[uu]^{\beta}&&}
\end{displaymath}

Since $Q_I$ has a subquiver of type $\tilde{\mathbb{A}}_2$ with no relations involved, we infer that $B_I$ is not representation-finite. If now $I = <\alpha \beta, \beta \alpha>$, then $B_I$ is a \textit{string} algebra, and so it must be tame by \cite{BR}. It remains to show that if $B_I$ is tame, then $\alpha \beta$ and $\beta \alpha$ both belong to $I$. Suppose this does not happen. Now $(Q,I)$ admits a covering

\begin{displaymath}
\xymatrix{
& c_0 \ar[dd] \ar[dr] && c_1 \ar[dd] \ar[dr]&& c_2 \ar[dd] \ar[dr]&&& \\
 \cdots && a_0 \ar[dl]^{\alpha} && a_1 \ar[dl]^{\alpha}&& a_2 \ar[dl]^{\alpha}&& \cdots \\
& b_0 \ar[ul]^{\beta} && b_1 \ar[ul]^{\beta}&& b_2 \ar[ul]^{\beta}&& b_3 \ar[ul]^{\beta} &}
\end{displaymath}

If $\alpha \beta \notin I$, then this covering has the following subquiver with no relations:

\begin{displaymath}
\xymatrix{& c_i \ar[dd] \ar[dr] & \\
a_{i-1} && a_i \ar[dl]^{\alpha}\\
& b_i \ar[ul]^{\beta}& }
\end{displaymath}

Similarly, if $\beta \alpha \notin I$, then it has the following subquiver without relations:

\begin{displaymath}
\xymatrix{c_i \ar[dd] \ar[dr] && \\
& a_i \ar[dl]^{\alpha} & \\
b_i && b_{i+1} \ar[ul]^{\beta}}
\end{displaymath}

In both cases the subquivers are non-Euclidean, which implies that the category of (finitely-generated) representations over the covering above is wild. Therefore, $B_I$ must be itself wild, see for instance \cite{DS} or \cite{dlP}.

\end{proof}


The main result on tameness is the following.

\begin{theorem} \label{reptame}
Let $\Lambda = k(\Gamma, \Aa) \cong kQ/I$ be a gp-algebra and its realization as a quotient of a path algebra. Assume that none of the ordinary quivers $\Sigma_i$ of the algebras $A_i$ in $\Aa$ have loops.  Then $\Lambda$ is of strict tame type if and only if the underlying graph of $\Gamma$ is Euclidean or of type $\A_n$ ($n \geq 1$) or $\D_n$ ($n\geq 4$) and 
\benu
\item[(i)]  If $\Gamma$ is Euclidean, then $A_i = k$ for every $i \in \Gamma_0$.
\item[(ii)] If $\Gamma$ is $ \D_4$, then one and only one of the algebras $A_1, A_2$ or $ A_4$ equals to  $k^2$ and the other algebras of $\Aa$ equal  $k$. 
\item[(iii)] If $\Gamma$ is $ \D_n$ with $n \geq 5$, then $A_n = k^2$ and   $A_j = k$ if $j \neq n$.
\item[(iv)] If $\Gamma$ is $ \A_1$, then  $A_1$ is of strict tame type.
\item[(v)]   If  $\Gamma$ is $\A_2$, then either
\begin{enumerate}
\item $A_1 = A_2 = k^2$; or
\item one of $A_1$ and $A_2$ is $k$ and the other is one of the following possibilities: $k^4$, $k\A_2$, or the radical square zero algebra given by the quiver 
\begin{displaymath}
 \xymatrix{a \ar@<.5ex>[rr]^{\alpha} && b \ar@<.5ex>[ll]^{\beta}}
\end{displaymath}
\end{enumerate}
\item[(vi)] If $\Gamma$ is $\A_3$, then either \begin{enumerate}
\item $A_1 = A_3 = k$ and $A_2 = k^2$; or
\item $A_1 = A_3 = k^2$ and $A_2 = k$; or
\item one and only one of $A_1$ or $A_3$ is $k^3$ and the other algebras  equal to $k$.
\end{enumerate}
\item[(vii)] If $\Gamma$ is  $\A_n$ with $n \geq 4$, then  $A_1 = A_n = k^2$ and $A_i = k$ for every $i \neq 1,n$.
\enu
\end{theorem}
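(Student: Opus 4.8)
The plan is to follow the same strategy as in the proof of Theorem~\ref{repfinite}, combining the structural description of the ordinary quiver $Q$ from Section~\ref{prel} with Theorems~\ref{dynkin} and \ref{euclidean} and Lemma~\ref{lem:exception}, and treating the analysis as a systematic case-by-case bookkeeping on the possible shapes of $\overline{\Gamma}$ and the dimensions (equivalently, the ordinary quivers $\Sigma_i$) of the algebras $A_i$. The essential mechanism is always the same: an algebra $A_i$ of dimension $\geq 2$ sitting at vertex $i\in\Gamma_0$ forces $Q$ to acquire extra vertices and/or arrows near the image of $i$, and one checks whether the resulting underlying graph $\overline{Q}$ contains, as a subquiver with no relations on the relevant arrows (invoking Remark~\ref{typeI} whenever only one edge of type~II is involved), a graph which is neither Dynkin nor Euclidean; in that case $\Lambda$ is wild by Theorem~\ref{euclidean} and the covering/wildness arguments as in Lemma~\ref{lem:exception}. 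Conversely, in each surviving configuration one verifies directly that $\overline{Q}$ is either Euclidean, or Dynkin, or an algebra handled by Lemma~\ref{lem:exception}, and that the presence of a genuine Euclidean (or $B_I$-type) piece together with at least one honest arrow of type~I rules out representation-finiteness, so that $\Lambda$ is strictly tame.

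Concretely, I would organize the argument as follows. First, a preliminary lemma in the spirit of Lemma~\ref{lemrepfinite}: if $\Lambda$ is strictly tame then $\overline{\Gamma}$ must be Dynkin or Euclidean (by Proposition~\ref{prop:rep}, since $\Gamma$ is a subquiver of $Q$ and a non-Dynkin, non-Euclidean graph is wild), and moreover if $\dim_k A_i\geq 2$ for some non-endpoint $i$ of $\Gamma$, or if $\overline{\Gamma}$ is Euclidean and any $A_i\neq k$, or if $\overline{\Gamma}$ is $\D_n$ with two ``heavy'' algebras at the branch ends, then one of the forbidden subgraphs (those drawn in the proof of Lemma~\ref{lemrepfinite}, now allowed to be slightly larger since we only need non-Euclidean) appears in $\overline{Q}$; this pins down that the heavy algebras can only sit at endpoints and forces (i). As in the representation-finite case, one shows next that a heavy $A_i$ at an endpoint must have $\Sigma_i$ with no arrows, hence $A_i=k^m$, by exhibiting a non-Euclidean subgraph of type~I otherwise (using that $\Sigma_i$ having an arrow forces $|(\Sigma_i)_0|\geq 2$ since $Q$ has no loops). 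Then the problem reduces to: $\overline{\Gamma}$ is $\A_n$ or $\D_n$, each endpoint carries some $k^{m}$, and one must determine for which $n$ and which $m$'s the resulting $\overline{Q}$ is Euclidean (or a $B_I$-type algebra). This is a finite check: for $\D_n$ ($n\geq 5$) only $A_n=k^2$ survives, giving $\tilde{\D}_{n+1}$, while $k^3$ at $A_n$ or a heavy algebra elsewhere produces a wild graph — this is (iii); for $\D_4$, a single $k^2$ at one of the three endpoints gives $\tilde{\D}_5$, yielding (ii); for $\A_n$ the split into (iv)--(vii) records the transition $\A_n\rightsquigarrow\tilde{\A}$ or $\tilde{\D}$ or $\tilde{\E}$ depending on $n$ and on whether one endpoint is $k^{l}$ (with the combinatorics of $l$ exactly matching the Euclidean list), with the case $n=2$ additionally producing the $B_I$ algebra and $k\A_2$ as borderline options handled by Lemma~\ref{lem:exception} and the fact that $k\tilde{\A}$-type shapes are tame.

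The base case $\A_1$ (item (iv)) is immediate: as noted in Subsection~\ref{sub:gpalgebras}, $\Lambda\cong A_1$, so $\Lambda$ is strictly tame iff $A_1$ is. The small cases $n=2,3$ require the most care because several genuinely different algebras $A_i$ (not merely $k^m$) can occur: when $\overline{\Gamma}=\A_2$ and, say, $A_1=k$, the algebra $A_2$ may itself be non-semisimple, and one must run through the admissible possibilities for $\Sigma_2$ and its relations, identifying when $\overline{Q}$ is Euclidean — here $k^4$ gives $\tilde{\D}_5$, $k\A_2$ gives $\tilde{\A}$-type behaviour through the extra arrow, and the radical-square-zero two-cycle algebra is precisely the $B_I$ situation of Lemma~\ref{lem:exception} with $I=\langle\alpha\beta,\beta\alpha\rangle$ after the type~I arrow is attached; all other choices of $A_2$ inflate $\overline{Q}$ past the Euclidean threshold. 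The main obstacle, as in Section~\ref{finiteness} but more acute here, is exhaustiveness of this low-rank casework: one must be sure that \emph{every} admissible $(\Sigma_i,I_i)$ with $\dim_k A_i$ small has been accounted for and correctly classified as Dynkin-producing, Euclidean-producing, $B_I$-type, or wild-producing, and that Remark~\ref{typeI} genuinely applies (only one type~II edge present) in each configuration where we claim ``no relations''. Once the forward direction is complete, the converse is routine: in each listed case one either reads off that $\overline{Q}$ is Euclidean (apply Theorem~\ref{euclidean}), or recognizes $\Lambda$ as a $B_I$ algebra with $I=\langle\alpha\beta,\beta\alpha\rangle$ (apply Lemma~\ref{lem:exception}), and in all cases the presence of at least one true arrow of type~I guarantees $\Lambda$ is not representation-finite, so $\Lambda$ is strictly tame.
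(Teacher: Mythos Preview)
Your overall strategy matches the paper's, but your preliminary lemma is stated too strongly and as written would cause you to miss case (vi)(a). You claim that if $\dim_k A_i\geq 2$ at a non-endpoint $i$, then $\overline Q$ contains a forbidden (non-Dynkin, non-Euclidean) subgraph. This is false: take $\Gamma=\A_3$ with $A_2=k^2$ and $A_1=A_3=k$. The ordinary quiver $Q$ then has four vertices and its underlying graph is precisely $\tilde\A_3$, which is Euclidean, so $\Lambda$ is strictly tame. The subgraph you would borrow from Lemma~\ref{lemrepfinite} in this situation is the $4$-cycle, which was forbidden there only because it is not Dynkin; it \emph{is} Euclidean and hence perfectly admissible here. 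The paper's Lemma~\ref{lemtame} handles this correctly: the conclusion when $i$ is not an endpoint is not ``wild'' but rather ``$\Gamma=\A_3$, $A_2=k^2$, $A_1=A_3=k$'', and one must then verify separately that any further enlargement (e.g.\ $\dim_k A_2\geq 3$, or $\Gamma$ longer than $\A_3$ with a heavy interior vertex) does produce a wild subgraph.

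A smaller but related slip: your blanket claim that a heavy $A_i$ at an endpoint must have $\Sigma_i$ with no arrows (hence $A_i=k^m$) is only valid when the rest of $\Gamma$ is long enough to manufacture a wild subgraph from a single type-II edge together with the surrounding type-I edges. For $\Gamma=\A_2$ this fails, as you yourself acknowledge two paragraphs later; the plan should make explicit from the outset that the ``$A_i=k^m$'' reduction applies only for $\A_n$ with $n\geq 3$ and for $\D_n$, while $\A_2$ requires its own analysis admitting $\Sigma_i$ with one or two arrows (leading to $k\A_2$ and to the $B_I$ algebras of Lemma~\ref{lem:exception}). Finally, two minor bookkeeping corrections: $k^4$ over $\A_2$ and a single $k^2$ at a leaf of $\D_4$ both yield $\tilde\D_4$ (five vertices, one centre with four leaves), not $\tilde\D_5$; and in the converse, it is not the mere presence of a type-I arrow that rules out representation-finiteness, but the fact that $\overline Q$ is Euclidean rather than Dynkin.
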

\vspace{.3 cm}

As before, we start with a lemma. 
\vspace{.3 cm}

\begin{lemma}  \label{lemtame}
Let $\Lambda = k(\Gamma, \Aa) \cong kQ/I$ be a  gp-algebra of tame type such that none of the ordinary quivers $\Sigma_i$ of the algebras $A_i$ in $\Aa$ has loops. Suppose  dim$_kA_i \geq 2$ for some $i \in \Gamma_0$.
\benu
\item[(i)]  If $i$ is not an end point of $\Gamma$, then $\Gamma$ is of type $\A_3$. Also, in this case, $A_2=k^2$ and $A_1= A_3 = k$; 
\item[(ii)]  If $i$ is an end point of $\Gamma$, then  $\Gamma$ is either of type $\A_n$ for some $n \geq 1$ or of type $\D_n$, with $n\geq 4$.
\enu
\end{lemma}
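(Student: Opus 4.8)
The plan is to argue, exactly as in the proof of Lemma \ref{lemrepfinite}, by examining the subquivers of $Q$ that are forced by the hypothesis $\dim_k A_i \geq 2$ together with the absence of loops in the $\Sigma_j$. Since $\Lambda$ is tame, Proposition \ref{prop:rep} tells us $\Gamma$ must be Dynkin or Euclidean (if $\Gamma$ contained a proper non-Dynkin, non-Euclidean subgraph, the full and faithful functor $\mathrm{rep}_k(\Gamma)\to\mathrm{rep}_k(Q,I)$ would force wildness). The hypothesis $\dim_k A_i\geq 2$ forces $|(\Sigma_i)_0|\geq 2$, because a one-vertex quiver with a loop is excluded by assumption, so $A_i$ can only have dimension $\geq 2$ by having at least two vertices in $\Sigma_i$. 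This means that in $Q$ the single vertex $i$ of $\Gamma$ is replaced by at least two vertices, and every arrow of $\Gamma$ incident to $i$ is replaced by a family of type I arrows connecting \emph{every} vertex of $\Sigma_i$ to the relevant vertices; crucially, all these type I edges carry no relations when the configuration has at most one type II edge, by Remark \ref{typeI}.

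For part (i), suppose $i$ is an interior vertex of $\Gamma$. Then $\overline{\Gamma}$ contains at least three edges at $i$, or $i$ lies on a path of length $\geq 2$ on both sides. First I would rule out the case where $i$ has three or more neighbours in $\Gamma$: blowing $i$ up to two vertices of $\Sigma_i$ and connecting each to three external vertices by type I edges produces a subgraph of $\overline{Q}$ with no relations that strictly contains $\tilde{\D}$ or a still larger tree, hence is wild — contradiction. So $i$ has exactly two neighbours, i.e. $\Gamma$ is of type $\A_n$ and $i$ is interior. Next, if $|(\Sigma_i)_0|\geq 3$, the analogous blow-up gives a non-Euclidean tree (a vertex of degree $\geq 3$ attached to three long branches), wild again; so $|(\Sigma_i)_0|=2$, which combined with no loops in $\Sigma_i$ forces either $A_i=k^2$ or $A_i = k\A_2$. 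If $A_i=k\A_2$, the type II edge plus the type I blow-up of the two neighbours yields $\overline{Q}$ containing a non-Euclidean graph with no relations on the type I part, so $A_i=k^2$. Finally, if $n\geq 4$, so that one side of $i$ has a path of length $\geq 2$, replacing $i$ by two vertices each joined by type I edges to its two neighbours produces (after collapsing) a graph of $\overline{Q}$ properly containing $\tilde{\D}_{n-1}$ or longer, hence wild; this forces $n=3$, and then the two neighbours are end points, so $\dim_k A_1 = \dim_k A_3 = 1$ by the same blow-up argument (otherwise $\overline{Q}$ again contains a proper Euclidean overgraph). This gives $A_2=k^2$, $A_1=A_3=k$.

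For part (ii), suppose $i$ is an end point with $\dim_k A_i\geq 2$, hence $|(\Sigma_i)_0|\geq 2$. The claim is just that $\overline{\Gamma}$ cannot be of type $\E_6,\E_7,\E_8$ or $\tilde{\E}_6,\tilde{\E}_7,\tilde{\E}_8,\tilde{\A}_n,\tilde{\D}_n$. In each of these, every end point sits at the end of a branch of length $\geq 2$ emanating from a vertex of degree $\geq 3$ (or, for $\tilde{\A}_n$, on a cycle). Blowing up the end point $i$ into two vertices, each attached by type I edges to the (unique) neighbour of $i$ in $\Gamma$, yields a subgraph of $\overline{Q}$ — with no relations on the type I edges — that strictly contains the ambient Euclidean graph, or contains a graph obtained from a Dynkin $\E$ by adding an extra edge at an end, i.e. a non-Euclidean graph; either way $\Lambda$ is wild, a contradiction. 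Hence $\overline{\Gamma}$ is $\A_n$ ($n\geq 1$) or $\D_n$ ($n\geq 4$).

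The main obstacle I anticipate is bookkeeping: making the overgraph-of-$Q$ arguments precise and uniform across the $\E$ and $\tilde{\E}$ cases, and in particular checking carefully that the blown-up configurations really are proper overgraphs of a non-Dynkin, non-Euclidean graph (so that Proposition \ref{prop:rep} and the standard fact that proper subgraphs of Euclidean graphs are Dynkin while proper overgraphs are wild can be invoked), rather than accidentally landing back on a Euclidean graph. The $\tilde{\A}_n$ case also needs a small separate remark since there the ``end point'' language does not literally apply — but since $|\Gamma_0|$ large with $\dim_k A_i\geq 2$ at a cycle vertex immediately produces a non-Euclidean subgraph with no relations, this is quick.
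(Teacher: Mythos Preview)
Your overall strategy is exactly the paper's --- restrict $\Gamma$ to Dynkin or Euclidean via Proposition~\ref{prop:rep}, then exhibit relation-free subgraphs of $\overline{Q}$ (using Remark~\ref{typeI}) that strictly contain a Euclidean graph --- but the case analysis in part (i) has real holes beyond the bookkeeping you anticipated. The inference ``$i$ has exactly two neighbours, i.e.\ $\Gamma$ is of type $\A_n$'' is simply false: degree-$2$ interior vertices occur in $\D_n$, $\E_p$, $\tilde{\A}_n$, $\tilde{\D}_n$, $\tilde{\E}_p$ as well. Your ``one side has a path of length $\geq 2$'' argument does in fact dispose of all the tree cases once you say so, but it misses $\tilde{\A}_1$ (where $i$ has a \emph{single} neighbour reached by two arrows, so $i$ is not an end point yet falls into neither of your two subcases) and $\tilde{\A}_2$ (where $i$'s two neighbours are adjacent to each other, so neither side is a path of length $\geq 2$). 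The paper treats $\tilde{\A}_1$ and $\tilde{\A}_2$ explicitly and separately; you will have to as well. Also, ``$|(\Sigma_i)_0|=2$ with no loops forces $A_i=k^2$ or $A_i=k\A_2$'' is wrong --- $\Sigma_i$ may have several arrows between its two vertices. The correct argument is that \emph{any} arrow in $\Sigma_i$ adds a type~II chord to the forced $4$-cycle, and the resulting graph is still wild with at most one type~II edge.

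In part (ii), the claim that every end point of an $\E_p$ or $\tilde{\E}_p$ graph lies at the end of a branch of length $\geq 2$ is false for the short arm (vertex $4$ in $\E_6,\E_7,\E_8$; vertex $5$ in $\tilde{\E}_7$; vertex $4$ in $\tilde{\E}_8$). Blowing up that end point still yields a wild subgraph --- the branching vertex acquires degree $4$, so $\overline{Q}$ properly contains $\tilde{\D}_4$ --- but this is a different configuration from the one you describe, and the paper singles it out as its own case. Finally, your $\tilde{\A}_n$ remark is misplaced: since $\tilde{\A}_n$ has no end points it is vacuously excluded from (ii); the cycle case belongs entirely to (i), where you have not dealt with it.
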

\begin{proof}
Since $\Lambda$ is of tame type, then $\Gamma$ is either Dynkin or Euclidean because of Proposition~\ref{prop:rep} and Theorems   \ref{dynkin}  and \ref{euclidean}. \\
(i) Assume that $i$ is not an end point of $\Gamma$. In particular, the cases $\A_1$ and $\A_2$ are not possible.  If now $\Gamma$ is of type $\A_3$, then, since $i$ is not an end point, $i = 2$ (in the numbering we have fixed above). In this case, dim$_k A_2 = 2$ because otherwise $\oQ$ will have a subgraph 
\begin{displaymath}
\xymatrix{ &  \bullet \ar@{-}[dr] & &  \\ \bullet \ar@{-}[ur] \ar@{-}[dr] \ar@{-}[r] & \bullet \ar@{-}[r] &  \bullet \\ & \bullet \ar@{-}[ur] & &  }
\hspace{1.3cm}
\text{or}
\hspace{1.3cm}
\xymatrix{ &  \bullet \ar@{-}[dr] \ar@{-}[d] & &  \\ \bullet \ar@{-}[ur]  \ar@{-}[r] & \bullet \ar@{-}[r] &  \bullet  }
\end{displaymath}
with at most one edge of type II, again a contradiction (see Remark  \ref{typeI}). Also, in this case, $A_1 = A_3 = k$, otherwise, $\oQ$ will have a subgraph of type 
\begin{displaymath}
\xymatrix{\bullet \ar@{-}[r] \ar@{-}[dr] & \bullet \ar@{-}[r] & \bullet \\
\bullet \ar@{-}[r] \ar@{-}[ur] & \bullet &  }
\end{displaymath}
with at most one edge of type II, leading to a contradiction. This gives the only possibility in item (i).   \\
It remains to analyse the cases where $\Gamma$ is of types $\A_n$ ($n \geq 4$), $\D_n$ ($n\geq 4$), $\E_6$, $\E_7$, $\E_8$, $\tilde{\mathbb{A}}_n$ ($n \geq 1$), $\tilde{\mathbb{D}}_n$ ($n\geq 4$), $\tilde{\mathbb{E}}_6$, $\tilde{\mathbb{E}}_7$ and $\tilde{\mathbb{E}}_8$ and show that they are not possible. If $\Gamma$ is of type $\tilde{\mathbb{A}}_1$ (that is, $\Gamma$ is the Kronecker quiver), then $\oQ$ will have a subgraph of type 
{\small \begin{displaymath}
 \xymatrix{ && \bullet\\
 \bullet \ar@<.5ex>@{-}[urr] \ar@<-.5ex>@{-}[urr] \ar@<.5ex>@{-}[drr] \ar@<-.5ex>@{-}[drr] && \\
 && \bullet }
\end{displaymath}	}
with only edges of type I ans so $\Lambda$ would not be of tame type, a contradiction. If $\Gamma$ is of type $\tilde{\mathbb{A}}_2$, then $\oQ$ will have a subgraph of type

\begin{displaymath}
\xymatrix{& \bullet \ar@{-}[r] \ar@{-}[dr]& \bullet \\    
		\bullet \ar@{-}[rr] \ar@{-}[ur]& & \bullet}
\end{displaymath}

with only edges of type I, also contradicting the fact that $\Lambda$ is of tame type. \\
 Observe that in all the others  cases, $\overline{\Gamma}$ will have either a subgraph  $\bullet -- \bullet -- i -- \bullet$ or a subgraph $\bullet -- i -- \bullet -- \bullet$. In both cases, $\oQ$ will contain a subgraph 
\begin{displaymath}
\xymatrix{ &  \bullet \ar@{-}[dr] & & & \\ \bullet \ar@{-}[ur] \ar@{-}[dr] &  & \bullet \ar@{-}[r] & \bullet \\ & \bullet \ar@{-}[ur] & & &  }
\end{displaymath}
with all edges of type I, a contradiction to $\Lambda$ being tame. This proves (i). \\
(ii) Assume now that $i$ is an end point of $\Gamma$ and suppose $\Gamma$ is neither of type $\A_n$ nor of type $\D_n$ ($n\geq 4)$. Observe also that $\Gamma$ is not of type $\tilde{\mathbb{A}}_n$ because  there are no end points in such quivers. Then $\oQ$ will have a subgraph of one of the following types:
\begin{displaymath}
\xymatrix{\bullet \ar@{-}[dr]&&& &   \bullet \\
 &\bullet \ar@{-}[r]& \cdots\ar@{-}[r] & \bullet \ar@{-}[r]
 \ar@{-}[dr] \ar@{-}[ur] & \bullet \\
\bullet \ar@{-}[ur]&&& &   \bullet }
\hspace{1.3cm}
(\mbox{type } \tilde{\mathbb{D}}_n,  i = 1, 2, n\mbox{ or }n+1)
\end{displaymath}
\begin{displaymath}
\xymatrix{\bullet \ar@{-}[dr]&& \bullet \ar@{-}[d]& &   \\
 &\bullet \ar@{-}[r]& \bullet \ar@{-}[r] & \bullet
 \ar@{-}[r]  & \bullet\\
\bullet \ar@{-}[ur]&&& &   }
\hspace{1.3cm}
\begin{array}{l} (\mbox{type } {\mathbb{E}}_6,  i = 1, 6)\\
(\mbox{types } {\mathbb{E}}_7, {\mathbb{E}}_8,  \tilde{\mathbb{E}}_8, i = 1)\\
(\mbox{types } \tilde{\mathbb{E}}_6,  i = 1,4,7)
\end{array}
\end{displaymath}
\begin{displaymath}
\xymatrix{& & \bullet \ar@{-}[d] & & & \bullet \\
\bullet \ar@{-}[r] & \bullet \ar@{-}[r] &\bullet \ar@{-}[r]& \cdots\ar@{-}[r] & \bullet 
 \ar@{-}[dr] \ar@{-}[ur] & \\ &&&&& \bullet }
\hspace{1.3cm}
\begin{array}{l} (\mbox{type } {\mathbb{E}}_7,  i = 7)\\
(\mbox{type } {\mathbb{E}}_8,   i = 8)\\
(\mbox{type } \tilde{\mathbb{E}}_7,  i = 1, 8) \\
(\mbox{type } \tilde{\mathbb{E}}_8,  i = 9)
\end{array}
\end{displaymath}
\begin{displaymath}
\xymatrix{\bullet \ar@{-}[dr] & & \bullet & \\
\bullet \ar@{-}[r] & \bullet \ar@{-}[r] \ar@{-}[ur]&\bullet \ar@{-}[r] & \bullet }
\hspace{1.3cm}
\begin{array}{l} (\mbox{types } {\mathbb{E}}_6, {\mathbb{E}}_7, {\mathbb{E}}_8,  i = 4)\\
(\mbox{type } \tilde{\mathbb{E}}_7,  i = 5) \\
(\mbox{type } \tilde{\mathbb{E}}_8,  i = 4)
\end{array}
\end{displaymath}
involving only edges of type I, contradicting the fact that $\Lambda$ is tame. This proves (ii). 
\end{proof}

We shall now prove Theorem \ref{reptame}.

\begin{proof} (Theorem \ref{reptame}). 
Since $\Lambda$ is of tame type, then $Q$ cannot have a proper subquiver without relations of Euclidean type. So, $\Gamma$ is either Dynkin or Euclidean.\\ 
Assume there is an $i \in \Gamma_0$ such that dim$_kA_i   \geq 2$. By Lemma \ref{lemtame}, $\Gamma$ is either of type $\A_n$ ($n\geq 1)$ or of type $\D_n$ ($n\geq 4)$. \\ 
So, if $\Gamma$ is Euclidean or Dynkin of type $\E_p$ ($p =6,7,8$), then $A_j = k$, for each $j \in \Gamma_0$,  $Q = \Gamma$ and $\Lambda$ is the path algebra $kQ$. Because we are assuming $\Lambda$ to be of strict tame type, the Dynkin graphs above are excluded and (i) is proved. \\
It remains to analyse the cases listed in Lemma \ref{lemtame}, they are $\A_n $ ($n \geq 1$) or $\D_n$ $(n \geq 4)$ upon the existence of the vertex $i$ which satisfies dim$_kA_i = m  \geq 2$. Whe shall keep this hypothesis on the vertex $i$ for the rest of our proof. Also, as befores, se shall write for each $j\in(\Gamma_0$, $A_j = k\Sigma_j/I_j$. \\
Consider first that $\Gamma$ is of type $\D_n$. Because of Lemma \ref{lemtame}, the vertex $i$ has to be an end point. We show now that there exists only one of such vertex and $m = 2$. If $\Gamma$ is $\D_4$ and two of $A_1, A_2$ or $A_4$ have dimension greater than one or if $m \geq 3$, then $\oQ$ will contain a  subgraph
\begin{displaymath}
\xymatrix{\bullet \ar@{-}[dr] && \bullet\\
\bullet\ar@{-}[r] & \bullet \ar@{-}[dr] \ar@{-}[ur]& \\
\bullet \ar@{-}[ur] && \bullet\\
}
\end{displaymath}
with only edges of type I, which is not of tame type. Since $Q$ is not Dynkin, because it is not representation-finite,  the only possibility is that one and only one of the $A_1, A_2$ or $A_4$ is $k^2$ and this proves (ii). \\
(iii)  Now, assume that $\Gamma = \D_n$ with $n\geq 5$.  If dim$_kA_i \geq 2$ for either $i =1$ or $i = 2$, then $\Sigma_i$ will have at least 2 vertices (because it has no loops) and $\oQ$ will have a subgraph
\begin{displaymath}
\xymatrix{\bullet \ar@{-}[dr]&&& \\
\bullet \ar@{-}[r] &\bullet \ar@{-}[r]&\bullet \ar@{-}[r]&\bullet \\
\bullet \ar@{-}[ur]&&&}
\end{displaymath}
with only edges of type I, which is not of tame type. So $A_1 = A_2 = k$ and, since $i$ is an end point, $i = n$ and $A_j = k$ for $j=3, \cdots, n-1$. 
If dim$_k A_n \geq 3$, then $\oQ$ will contain a subgraph as follows
\begin{displaymath}
\xymatrix{\bullet \ar@{-}[dr]&&& &   \bullet \\
 &\bullet \ar@{-}[r]& \cdots\ar@{-}[r] & \bullet \ar@{-}[r]
 \ar@{-}[dr] \ar@{-}[ur] & \bullet \\
\bullet \ar@{-}[ur]&&& &   \bullet }
\hspace{1.3cm}
\text{or}
\hspace{1.3cm}
\xymatrix{\bullet \ar@{-}[dr]&&& &   \bullet \ar@{-}[dd]\\
 &\bullet \ar@{-}[r]& \cdots\ar@{-}[r] & \bullet
 \ar@{-}[dr] \ar@{-}[ur] & \\
\bullet \ar@{-}[ur]&&& &   \bullet }
\end{displaymath}
with at most one arrow of type II, none of them of tame type. So, dim$_k A_n = 2$ and $A_n = k^2$ because $Q$ has no loops. This proves (iii).\\
Let us analyse now the case $\Gamma$ of type $\A_n$. \\
(iv) If $\Gamma$ is $\A_1$,  then $\Lambda \cong A_1$ and so, $A_1$ is of strict tame type as required. \\
(v) Suppose $\Gamma$ is $\A_2$, let us say, $1 \lra 2$. In case dim$_kA_1$ and dim$_k A_2$ are both greater or equal to two, then $A_1 = A_2 = k^2$ since otherwise $\oQ$ will have one of the following  subgraphs
\begin{displaymath}
\xymatrix{\bullet \ar@{-}[r] \ar@{-}[dr] & \bullet \\
\bullet \ar@{-}[r] \ar@{-}[ur] & \bullet \\
\bullet\ar@{-}[ur] & }
\hspace{2cm}
\text{or}
\hspace{2cm}
\xymatrix{\bullet \ar@{-}[r] \ar@{-}[dr] & \bullet \\
\bullet \ar@{-}[r] \ar@{-}[ur]\ar@{-}[u] & \bullet}
\end{displaymath}
with at most one edge of type II, leading to a contradiction. So, if dim$_kA_1 \geq 2 $ and dim$_k A_2 \geq 2$, then $A_1 = A_2 = k^2$, which is (v)(a). \\
Suppose without loss of generality that  dim$_k A_1 =m \geq 2$ and dim$_k A_2 = 1$. Write, as before, that $A_1 = k \Sigma_1/ I_1$. Let us analyse first the case where the quiver $\Sigma_1$ has no arrows. If $m =2$ or $3$, then $\oQ$ will be either $\A_3$ or $\D_4$, in both cases, representation-finite. On the other hand, if $m \geq 5$, then $\oQ$ will contain a subgraph of type 
\begin{displaymath}
\xymatrix{ \bullet \ar@{-}[ddrr]&& \\
\bullet \ar@{-}[drr]&& \\
\bullet \ar@{-}[rr]&& \bullet \\
\bullet \ar@{-}[urr]&& \\
\bullet \ar@{-}[uurr]&&}
\end{displaymath}
with no edges of type II, a contradiction to tameness. So, $A_1= k^4$ which is one of the possibilities listed in (v)(b).\\
Suppose now that $\Sigma_1$  has at least one arrow. Because, by hypothesis, $\Sigma_1$ has no loops, the number of vertices has to be exactly two, because otherwise $\oQ $ would contain a subgraph 
\begin{displaymath}
\xymatrix{\bullet  \ar@{-}[dr] &  \\
\bullet \ar@{-}[r] \ar@{-}[u] & \bullet \\
\bullet\ar@{-}[ur] & }
\end{displaymath}
with at most one edge of type II, which is not of tame type. If $\Sigma_1$ has only one arrow, then it is $\A_2$ and $A_1= k\A_2$, which is one of the possibilities listed in (v)(b) (and $\Gamma$ will be, in this case, of type $\tilde{\mathbb{A}}_2$) . \\
Assume then that $\Sigma$ has more than one arrow and let $a, b$ be its vertices. Observe that there are no two arrows going in the same direction because otherwise, $\Gamma$ will have one of the following subquivers
\begin{displaymath}
\xymatrix{\bullet \ar@<-.5ex>[dd] \ar@<.5ex>[dd]& & \\
&& \bullet \ar[ull] \ar[dll] \\
\bullet&&}
\hspace{ 1.5 cm} {\mbox{or}} \hspace{ 1.5 cm}
\xymatrix{\bullet \ar@<-.5ex>[dd] \ar@<.5ex>[dd] \ar[drr]& & \\
&& \bullet  \\
\bullet  \ar[urr]&&}
\end{displaymath}
with no relations, neither of them of tame type, a contradiction. Therefore, the only possibility left for $\Sigma_1$ is the quiver 
\begin{displaymath}
 \xymatrix{a \ar@<.5ex>[rr]^{\alpha} && b \ar@<.5ex>[ll]^{\beta}}
\end{displaymath}
and so, $\Lambda$ is one of  the algebras considered in Lemma \ref{lem:exception} which is tame if and only if $I_1 = < \alpha \beta, \beta\alpha >$. This finishes the proof of (v).  \\
(vi) Suppose now $\Gamma$ is $\A_3$ and, as before, $i$ is a vertex such that dim$_k A_i \geq 2$. If $i$ is not an end point of $\Gamma$, that is, $i = 2$, then, because of Lemma \ref{lemtame}, $A_2 = k^2$ and $A_1 = A_3 = k$ which is the possibility (vi)(a). \\
Suppose that $i$ is an end point of $\Gamma$, let us say $i = 1$. Also, because of Lemma \ref{lemtame}, $A_2 = k$. If also dim$_k A_3 \geq 2$, then $A_1 = A_3 = k^2$ since otherwise $\oQ$ will have a subgraph of type
\begin{displaymath}
\xymatrix{\bullet \ar@{-}[dr]  \ar@{-}[dd]&& \bullet \\
 &\bullet \ar@{-}[ur] \ar@{-}[dr] & \\
\bullet \ar@{-}[ur]&& \bullet }
\hspace{ 1.5 cm} {\mbox{or}} \hspace{ 1.5 cm}
\xymatrix{\bullet \ar@{-}[dr] && \bullet\\
\bullet\ar@{-}[r] & \bullet \ar@{-}[dr] \ar@{-}[ur]& \\
\bullet \ar@{-}[ur] && \bullet\\
}
\end{displaymath}
with at most one arrow of type II, which are not tame. This gives case (vi)(b). It remains the case where dim$_k A_1 =  m \geq 2$, $A_2 = A_3 = k$ (and its dual $A_1 = A_2 = k$ and dim$_k A_1 = m \geq 2$ which is proven similarly). If $m = 2$, then $Q$ is of type $\D_4$ which is representation-finite. So $m \geq 3$. If $m \geq 4$  or if $\Sigma_1$ has an arrow, then $\oQ$ will have a subgraph of one of the types
\begin{displaymath}
\xymatrix{ \bullet \ar@{-}[ddrr]&&&& \\
\bullet \ar@{-}[drr]&&&& \\
\bullet \ar@{-}[rr]&& \bullet \ar@{-}[rr] && \bullet \\
\bullet \ar@{-}[urr]&&&& }
\hspace{1 cm} { \mbox{ or }} \hspace{1 cm}
\vspace{.5 cm}\\ 
\xymatrix{\bullet \ar@{-}[dr]  \ar@{-}[dd]&& \\
 &\bullet \ar@{-}[r]&\bullet \\
\bullet \ar@{-}[ur]&&}
\end{displaymath}
with at most one arrow of type II, a contradiction. \\
So $m = 3$ and $\Sigma_1$ has no arrows, that is, $A_1 = k^3$ which is case (vi)(c) (and $\Gamma$ is $\tilde\D_4$).\\
(vii) Finally, assume $\Gamma$ is of type $\A_n$, with $n \geq 4$. Then, because of Lemma \ref{lemtame}, $i$ is an end point, let us say $i = 1$, that is,  dim$_k A_1 = m \geq 2$. If $m =2$, that is, $A_1 = k^2$ (because there are no loops in $\Sigma_1$), then $A_n = k^2$ because otherwise $\oQ$ is either $\D_n$ (which is not possible because the algebra is representation-infinite) of has a subgraph as follows
\begin{displaymath}
\xymatrix{\bullet \ar@{-}[dr]&&& &   \bullet \\
 &\bullet \ar@{-}[r]& \cdots\ar@{-}[r] & \bullet \ar@{-}[r]
 \ar@{-}[dr] \ar@{-}[ur] & \bullet \\
\bullet \ar@{-}[ur]&&& &   \bullet }
\hspace{1.3cm}
\text{or}
\hspace{1.3cm}
\xymatrix{\bullet \ar@{-}[dr]&&& &   \bullet \ar@{-}[dd]\\
 &\bullet \ar@{-}[r]& \cdots\ar@{-}[r] & \bullet
 \ar@{-}[dr] \ar@{-}[ur] & \\
\bullet \ar@{-}[ur]&&& &   \bullet }
\end{displaymath}
with at most one edge of type II, a contradiction. Finally, if $m \geq 3$, then $\oQ$ will  contain a subgraph as follows (recall that $Q$ has no loops):
\begin{displaymath}
\xymatrix{\bullet \ar@{-}[dr]&&& &   \\
 \bullet \ar@{-}[r] &\bullet \ar@{-}[r]& \cdots\ar@{-}[r] & \bullet \ar@{-}[r]
  & \bullet \\
\bullet \ar@{-}[ur]&&& &   }
\hspace{1.3cm}
\text{or}
\hspace{1.3cm}
\xymatrix{ &&& &   \bullet \ar@{-}[dd]\\
 \bullet\ar@{-}[r] &\bullet \ar@{-}[r]& \cdots\ar@{-}[r] & \bullet
 \ar@{-}[dr] \ar@{-}[ur] & \\
 &&& &   \bullet }
\end{displaymath}
with at most one edge of type II, none of them of tame type. So dim$_kA_1 =$ dim$_k A_n = 2$, giving (vii). \\
Now, for the converse, it is not difficult to see that all the algebras listed in (i) to (vii) are strict tame. 
This finishes the proof. 
\end{proof}

\section{Acknowledgments}

The authors gratefully acknowledge financial support by S\~ao Paulo Research Foundation (FAPESP), (grant numbers \#2020/13925-6 and \#2022/02403-4) and by CNPq (Pq 312590/2020-2).

\normalsize


\begin{thebibliography}{[ICNLP]}

\normalsize
\baselineskip=17pt



\bibitem{AC} 
Assem, I., Coelho, F. U., {\it Basic Representation Theory of Algebras}, Graduate Texts in Mathematics {\bf 283} Springer, 2020, x+311.

\bibitem{ASS} 
Assem, I., Skowro\'nski, A.,  Simson, D., {\it Elements of representation of algebras} Vol 1.  London Mathematical Society Student Texts {\bf 65}, Cambridge University Press, 2006. 

\bibitem{Barot}  
Barot, M., {\it Introduction to the Representation Theory of Algebras}, Springer, 2015.

\bibitem{BR}
Butler, M. C. R., Ringel, C. M., {\it Auslander-Reiten sequences with few middle terms and applications to string algebras}, Comm. Algebra {\bf 15}:1\&{}2 (1987) 145-179.

\bibitem{CC1} 
Chust, V., Coelho, F. U., {\it On the correspondence between path algebras and generalized path algebras}, Comm. Algebra {\bf 50}:5 (2022), 2056-2071. 

\bibitem{CC2} 
Chust, V., Coelho, F. U., {\it Representations of generalized bound path algebras}, S\~ao Paulo J. Math. Sci. {\bf 17}:2 (2023) 483-504. 

\bibitem{CLiu} 
Coelho, F. U., Liu, S.X, {\it  Generalized path algebras} {\it in:} Interaction between ring theory and representations of algebras. Proceedings of the conference held in Murcia, Spain.  53–66,
Lecture Notes in Pure and Appl. Math., {\bf 210}, Dekker, New York, 2000.

\bibitem{DS}
Dowbor, P., Skowro\'nski, A. {\it On Galois coverings of tame algebras}, Arch. Math. {\bf 44} (1985) 522-529.

\bibitem{ICNLP} 
Ib\'a\~nez-Cobos, R. M., Navarro, G.,  L\'opez-Pe\~na, J., {\it A note on generalized path algebras}, 
Rev. Roumaine Math. Pures Appl. {\bf 53}:1 (2008), 25–36.


\bibitem{Kir}  
Kirillov Jr., A. {\it Quiver Representations and Quiver Varieties}, Graduate Studies in Mathematics {\bf 174}, American Mathematical Society, Providence, RI, 2016.

\bibitem{dlP}
de la Peña, J. A. {\it Functors preserving tameness}, Fund. Math. {\bf 137}:3 (1991) 177-185.



\end{thebibliography}
\end{document}